\definecolor{nicegreen}{RGB}{0,180,0}
\newtheorem{thm}{Theorem}[section]
\newtheorem*{thm*}{Theorem}
\newtheorem{cor}[thm]{Corollary}
\newtheorem{lemma}[thm]{Lemma}
\theoremstyle{definition}
\newtheorem{defn}[thm]{Definition}
\newtheorem{rmk}[thm]{Remark}
\newcommand{\qp}{\mathbb{Q}_p}
\newcommand{\T}{\textnormal{T}}
\newcommand{\salpha}{\widehat{s_{\fat{\alpha}}}}
\newcommand{\bo}{\bar{\omega}}
\newcommand{\boldl}{\boldsymbol{\ell}}
\newcommand{\hgt}{\textnormal{ht}}
\newcommand{\Hom}{\textnormal{Hom}}
\newcommand{\End}{\textnormal{End}}
\newcommand{\ind}{\textnormal{Ind}}
\newcommand{\cind}{\textnormal{c-ind}}
\newcommand{\aff}{\textnormal{aff}}
\newcommand{\fat}[1]{\pmb{\boldsymbol{#1}}}
\newcommand*{\longhookrightarrow}{\ensuremath{\lhook\joinrel\relbar\joinrel\rightarrow}}
\newcommand*{\longtwoheadrightarrow}{\ensuremath{\relbar\joinrel\twoheadrightarrow}}
\newcommand{\sub}[2]{\genfrac{}{}{0pt}{}{#1}{#2}}
\newcommand{\sm}[4]{\left(\begin{smallmatrix} #1 & #2 \\ #3 & #4 \end{smallmatrix}\right)}
\newcommand{\cA}{{\mathcal{A}}}
\newcommand{\cC}{{\mathcal{C}}}
\newcommand{\cH}{{\mathcal{H}}}
\newcommand{\cP}{{\mathcal{P}}}
\newcommand{\cX}{{\mathcal{X}}}
\newcommand{\bB}{{\mathbf{B}}}
\newcommand{\bG}{{\mathbf{G}}}
\newcommand{\bH}{{\mathbf{H}}}
\newcommand{\bL}{{\mathbf{L}}}
\newcommand{\bS}{{\mathbf{S}}}
\newcommand{\bT}{{\mathbf{T}}}
\newcommand{\bU}{{\mathbf{U}}}
\newcommand{\bZ}{{\mathbf{Z}}}
\newcommand{\bbF}{{\mathbb{F}}}
\newcommand{\bbN}{{\mathbb{N}}}
\newcommand{\bbQ}{{\mathbb{Q}}}
\newcommand{\bbR}{{\mathbb{R}}}
\newcommand{\bbZ}{{\mathbb{Z}}}
\newcommand{\tG}{\widetilde{G}}
\newcommand{\tI}{\widetilde{I}}
\newcommand{\tW}{\widetilde{W}}
\newcommand{\tZ}{\widetilde{Z}}
\newcommand{\fm}{{\mathfrak{m}}}
\newcommand{\fo}{{\mathfrak{o}}}
\newcommand{\fp}{{\mathfrak{p}}}
\newcommand{\fs}{{\mathfrak{s}}}
\begin{document}
\nocite{}

\title{Hecke module structure on first and top pro-$p$-Iwahori cohomology}
\date{}
\author{Karol Kozio\l}
\address{Department of Mathematics, University of Toronto, 40 St. George Street, Toronto, ON M5S 2E4, Canada} \email{karol@math.toronto.edu}

\begin{abstract}
Let $p\geq 5$ be a prime number, $G$ a split connected reductive group defined over a $p$-adic field, and $I_1$ a choice of pro-$p$-Iwahori subgroup.  Let $C$ be an algebraically closed field of characteristic $p$ and $\mathcal{H}$ the pro-$p$-Iwahori--Hecke algebra over $C$ associated to $I_1$.  In this note, we compute the action of $\mathcal{H}$ on $\textnormal{H}^1(I_1,C)$ and $\textnormal{H}^{\textnormal{top}}(I_1,C)$ when the root system of $G$ is irreducible.  We also give some partial results in the general case.  
\end{abstract}

\maketitle

\section{Introduction}

The mod-$p$ Local Langlands correspondence is now well understood for the group $\textnormal{GL}_2(\qp)$, thanks to the work of Breuil, Colmez, Emerton, Kisin, Pa\v{s}k\={u}nas, and others (see \cite{breuil:icm} and the references therein).  Unfortunately, the smooth mod-$p$ representation theory of $p$-adic reductive groups is still quite mysterious outside of the $\textnormal{GL}_2(\qp)$ case (and other small examples), mainly due to the lack of an explicit description of the so-called supersingular representations.  This constitutes a major hindrance to formulating a mod-$p$ Local Langlands correspondence for general groups.

One potential remedy for these problems is obtained by passing to the derived setting.  Assume $p\geq 5$, let $G$ denote (the group of rational points of) a split connected reductive group over a $p$-adic field $F$, $I_1$ a pro-$p$-Iwahori subgroup of $G$, which we assume to be torsion-free, and $C$ an algebraically closed field of characteristic $p$.  Then Schneider has shown in \cite{schneider:dga} that the (unbounded) derived category of smooth $G$-representations over $C$ is equivalent to the (unbounded) derived category of differential graded $\cH^\bullet$-modules, where $\cH^\bullet$ is the derived Hecke algebra over $C$ with respect to $I_1$.  The equivalence is given by taking a complex $\pi^\bullet$ of smooth $G$-representations to the complex $\textnormal{R}\Gamma(I_1,\pi^\bullet)$, which has a natural action of $\cH^\bullet$.  One hopes that this equivalence will play a role in the burgeoning mod-$p$ Langlands program; see \cite{harris:specs}.

Regrettably, the functor inducing the derived equivalence above is challenging to compute explicitly.  We can try to shed some light on this difficulty by looking at the associated cohomology modules.  Taking $\pi^\bullet$ to be a single $G$-representation $\pi$ concentrated in degree 0, the space $\bigoplus_{i\geq 0}\textnormal{H}^i(I_1,\pi)$ becomes a graded module over the cohomology algebra $\bigoplus_{i\geq 0}\textnormal{H}^i(\cH^\bullet)$.  This cohomology algebra has been studied in the case when $G = \textnormal{SL}_2(F)$ by Ollivier--Schneider in \cite{ollivierschneider:torsion}.  In this note, we focus instead on the cohomology module $\bigoplus_{i\geq 0}\textnormal{H}^i(I_1,\pi)$ in the easiest case when $\pi$ is equal to the trivial representation of $G$, and attempt to understand (part of) the action of $\bigoplus_{i\geq 0}\textnormal{H}^i(\cH^\bullet)$ thereupon.  In a sequel to this note, we will examine the case where $\pi$ is a principal series representation.

We now state more precisely the contents of this note.  Our goal will be to compute the action of the degree 0 part $\textnormal{H}^0(\cH^\bullet)$ of the cohomology algebra, which is naturally isomorphic to the classical pro-$p$-Iwahori--Hecke algebra $\cH := \textnormal{End}_G(\textnormal{c-ind}_{I_1}^G(C))$.  We further restrict our attention to computing the action of $\cH$ on the first and top cohomology spaces, i.e., the spaces $\textnormal{H}^1(I_1,C)$ and $\textnormal{H}^{\textnormal{top}}(I_1,C)$.  After recalling the basic notation and necessary facts about the algebra $\cH$ and its modules in Sections \ref{notation} and \ref{heckealg}, we carry out in Sections \ref{calcofxi} and \ref{frattinisect} some preliminary calculations for the action of $\cH$ on $I_1$-cohomology.  We then apply the results obtained to $\textnormal{H}^1(I_1,C)$ in Section \ref{first} and arrive at the following.  For a precise and more detailed statement, see Theorem \ref{mainthmtrivchar}.

\begin{thm*}[Theorem \ref{mainthmtrivchar}]
Suppose that the root system of $G$ is irreducible, and let $\chi_{\textnormal{triv}}$ denote the trivial character of $\cH$ (described in Subsection \ref{ssingsect} below).
\begin{enumerate}[(a)]
\item  If $F = \qp$ and the root system of $G$ is of type $A_1$, then we have an isomorphism of $\cH$-modules
$$\textnormal{H}^1(I_1,C) \cong \chi_{\textnormal{triv}}^{\oplus a}\oplus \ind_{\cH_T}^{\cH}(\overline{\alpha}),$$
where $\alpha$ is the unique positive root of $G$, and $\ind_{\cH_T}^{\cH}(\overline{\alpha})$ is the simple $\cH$-module given by $\ind_B^G(\overline{\alpha})^{I_1}$.  
\item If $F \neq \qp$ or the root system of $G$ is not of type $A_1$, then we have an isomorphism of $\cH$-modules
$$\textnormal{H}^1(I_1,C) \cong \chi_{\textnormal{triv}}^{\oplus a}\oplus \bigoplus_{\fat{\alpha}\in \Pi_{\aff}/\Omega}\bigoplus_{r = 0}^{f - 1}\fm_{\fat{\alpha},r},$$
where $\fm_{\fat{\alpha},r}$ is a simple supersingular $\cH$-module.  
\end{enumerate}
\end{thm*}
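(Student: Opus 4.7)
The plan is to exploit the fact that $I_1$ is a torsion-free pro-$p$ group and $C$ is of characteristic $p$, which gives the natural identification
$$\textnormal{H}^1(I_1,C) \;\cong\; \Hom_{\textnormal{cts}}(I_1,C) \;\cong\; \Hom_{\ffp}\bigl(I_1/\Phi(I_1),\,C\bigr),$$
where $\Phi(I_1)$ denotes the Frattini subgroup. The preliminary results of Sections \ref{calcofxi} and \ref{frattinisect} can then be leveraged to transport the action of $\cH$ on $\textnormal{H}^1(I_1,C)$ to a combinatorial action on (the dual of) the Frattini quotient of $I_1$.

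\medskip

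First I would decompose $I_1/\Phi(I_1)$ as an $\ffp$-vector space according to the root-subgroup filtration of $I_1$: the inclusions $T_1:= T\cap I_1 \hookrightarrow I_1$ and $U_{\fat{\alpha},1}\hookrightarrow I_1$ (for each $\fat{\alpha}\in\Pi_{\aff}$) induce, after passing to Frattini quotients, a direct sum decomposition indexed by the torus and the affine simple roots. Dualizing, $\textnormal{H}^1(I_1,C)$ splits as a $C$-vector space into a ``torus piece'' and a piece for each $\fat{\alpha}$. Since $U_{\fat{\alpha},1}/\Phi(U_{\fat{\alpha},1})\cong \ffq$ as additive groups (with $q=p^f$), the $\fat{\alpha}$-piece contributes $f$ lines, naturally parametrized by the $f$ Frobenius embeddings $\ffq\hookrightarrow C$, which accounts for the index $r$ in $\fm_{\fat{\alpha},r}$.

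\medskip

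Next, I would determine the $\cH$-action on each summand. The torus piece consists of homomorphisms that factor through the abelianization of the finite Weyl translates in a way insensitive to the Iwahori--Matsumoto generators $T_s$ for $s\in\Pi_{\aff}$, so each line there is isomorphic to $\chi_{\textnormal{triv}}$; the integer $a$ records the dimension of this summand, which depends on the root datum. For the root contributions, I would group together the lines corresponding to a single $\Omega$-orbit on $\Pi_{\aff}$ (since $\Omega$ permutes affine simple roots and hence permutes the associated Frattini lines): the resulting $\cH$-submodule is induced from the stabilizer in $\Omega$, and by direct computation using the formulas from Sections \ref{calcofxi}--\ref{frattinisect} the braid generators $T_s$ for $s\in\Pi_{\aff}$ act by $-1$ or $0$ appropriately on each summand, producing the simple supersingular module $\fm_{\fat{\alpha},r}$. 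Supersingularity would then be checked via the known criterion on the central Bernstein subalgebra recalled in Section \ref{heckealg}.

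\medskip

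The main obstacle is the exceptional case (a): when the root system is of type $A_1$ and $F=\qp$, one has $f=1$ and $|\Pi_{\aff}/\Omega|=1$, so the combinatorics above collapses, and the expected sum of a trivial character and a supersingular module is replaced by a sum of a trivial character and the induced principal series $\ind_{\cH_T}^{\cH}(\overline{\alpha})$. Handling this case requires carefully computing how the pro-$p$-Iwahori Hecke operators mix the two affine-simple-root contributions that collapsed into one $\Omega$-orbit, identifying the resulting $2$-dimensional $\cH$-submodule by matching its parameters against those of $\ind_B^G(\overline{\alpha})^{I_1}$, and verifying that in all other cases such a collapse cannot occur so that the supersingular description of (b) is forced.
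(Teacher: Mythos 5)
Your overall strategy is the same as the paper's: identify $\textnormal{H}^1(I_1,C)$ with the dual of the Frattini quotient of $I_1$ (Corollary \ref{h1basis}), split it into a torus part and lines $C\eta_{\fat{\alpha},r}$ indexed by $\fat{\alpha}\in\Pi_\aff$ and Frobenius twists $0\leq r\leq f-1$, and compute the action of the generators $\T_{\salpha}$, $\T_\omega$ on this basis. The genuine gap is your diagnosis of where the dichotomy between (a) and (b) comes from. It is not a combinatorial collapse caused by ``$f=1$ and $|\Pi_\aff/\Omega|=1$'': for $\bG=\bS\bL_2$ over $\qp$ the group $\Omega$ is trivial, so $|\Pi_\aff/\Omega|=2$, yet case (a) applies; conversely, for type $A_1$ over a ramified extension of $\qp$ one has $f=1$, and for $\bP\bG\bL_2$ over any $F$ one has $|\Pi_\aff/\Omega|=1$, yet case (b) applies (the two affine roots lying in one $\Omega$-orbit merely makes $\fm_{\fat{\alpha},r}$ two-dimensional, it does not destroy supersingularity). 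The actual mechanism is Lemma \ref{Tsalphatriv}: summing over the coset representatives $\salpha u_{\fat{\alpha}}([x])$, the only potentially surviving contribution to $\eta\cdot\T_{\salpha}$ is $\eta\left(u_{-\fat{\alpha}}(-qy)\right)$, i.e.\ a term carrying a factor $q/\varpi$ relative to the affine root $\fat{\alpha^*}=(-\alpha,1-\ell)$; this is nonzero in the Frattini quotient only when $\textnormal{val}(q)=[F:\qp]=1$, i.e.\ $F=\qp$, and it is detected by one of the $\eta_{\fat{\beta},r}$ only when $\fat{\alpha^*}\in\Pi_\aff$, i.e.\ $\Phi$ of type $A_1$. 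A proof built on your stated mechanism would sort several groups into the wrong case. Relatedly, your claim that in case (b) the generators $\T_{\salpha}$ act ``by $-1$ or $0$'' is off: they all act by $0$, and what rules out a twist of the trivial or sign character is not the eigenvalue on $\T_{\salpha}$ but the action of $\T_{\alpha^\vee([x])}$ with eigenvalue $x^{2p^r}\neq 1$.

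Two smaller omissions: you assert the $\Omega$-orbit span is ``the simple supersingular module'' without an argument for simplicity; this requires the characters $\overline{\beta}$, $\beta\in\Pi\sqcup\{-\alpha_0\}$, to be pairwise distinct, which holds when $\Phi$ is not of type $A_1$ or when $q\geq 7$, and the leftover case ($\Phi$ of type $A_1$, $q=5$) needs a separate direct check. And for the torus part, vanishing under the $\T_{\salpha}$ is not enough to conclude $\chi_{\textnormal{triv}}$: one must also verify that each $\T_\omega$, $\omega\in\widetilde{\Omega}$, acts as the identity, which is the content of Lemma \ref{TomegaonT} (the point being that $\omega t\omega^{-1}t^{-1}$ lies in $\langle\alpha^\vee(1+\fp)\rangle_{\alpha\in\Phi}$, which is quotiented out). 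Finally, the supersingularity criterion actually used is the one via restriction to $\cH_\aff$ and twists of the trivial and sign characters recalled in Subsection \ref{ssingsect}, rather than a Bernstein-central criterion, though that choice is cosmetic.
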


The unexplained notation is as follows: we let $T$ denote a fixed maximal torus of $G$, and $B$ a Borel subgroup containing $T$.  We denote by $\Pi_\aff$ the set of simple affine roots of $G$, and $\Omega$ the subgroup of length 0 elements in the extended affine Weyl group of $G$.  Note that $\Omega$ acts on $\Pi_\aff$.  Further, $a$ is an integer determined by the torus $T$, $\overline{\alpha}$ is the composition of $\alpha$ with the mod-$p$ cyclotomic character, and $f$ is the degree of the residue field of $F$ over $\bbF_p$.

We remark that the conclusion of the theorem above holds without the hypothesis that $I_1$ is torsion-free.  It is also interesting to note that supersingular $\cH$-modules are absent from $\textnormal{H}^1(I_1,C)$ if and only if $F = \qp$ and the root system of $G$ is of type $A_1$.  This mirrors the following expectation (for groups with irreducible root system): there exists an (underived) equivalence between the category of smooth $G$-representations generated by their $I_1$-invariant vectors and the category of $\cH$-modules if and only if $F = \qp$ and the root system of $G$ is of type $A_1$ (cf. \cite{ollivier:foncteur}, \cite{koziol:glnsln}).  When the root system of $G$ is not irreducible, we give some partial qualitative results about $\textnormal{H}^1(I_1,C)$ in Theorem \ref{arbrootsys}.

Finally, we apply the calculations of Section \ref{frattinisect} to compute the top cohomology $\textnormal{H}^{\textnormal{top}}(I_1,C)$, and to show the $\cH$-action is compatible with Poincar\'e duality.

\begin{thm*}[Theorems \ref{htop} and \ref{poincare}]
Suppose that the root system of $G$ is irreducible.  We then have isomorphisms of $\cH$-modules
$$\textnormal{H}^{\textnormal{top}}(I_1,C)\cong \chi_{\textnormal{triv}}$$
and 
$$\textnormal{H}^{i}(I_1,C)^* \cong \textnormal{H}^{\textnormal{top} - i}(I_1,C),$$
where $0 \leq i \leq \textnormal{top}$.  
\end{thm*}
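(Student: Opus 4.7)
The plan is to handle the two theorems in sequence, leveraging the computations of Section \ref{frattinisect}.

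For Theorem \ref{htop}, I would begin from the fact that $I_1$ is a torsion-free compact $p$-adic analytic group and hence a Poincar\'e duality pro-$p$ group of dimension $d := \textnormal{top} = \dim G$. Therefore $\textnormal{H}^d(I_1,C)$ is one-dimensional and $\cH$ acts through a character $\chi$, which I must show equals $\chi_{\textnormal{triv}}$. I would compute the values of $\chi$ on the Iwahori--Matsumoto generators $T_{\fat{\alpha}}$ (for $\fat{\alpha}\in\Pi_\aff$) and $T_\omega$ (for $\omega\in\Omega$) separately. For $T_\omega$, conjugation by a lift of $\omega$ is an automorphism of $I_1$, and its induced action on the orientation class should be trivial when the root system is irreducible, giving $\chi(T_\omega) = 1$. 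For $T_{\fat{\alpha}}$, the operator is realized by restriction-conjugation-corestriction across cosets of the open subgroup $I_1\cap s_{\fat{\alpha}} I_1 s_{\fat{\alpha}}^{-1}$ in $I_1$, whose index is a positive power of $p$; since this subgroup has the same PD dimension as $I_1$, the composition $\textnormal{cor}\circ\textnormal{res}$ in top degree is multiplication by the index, which vanishes in $C$, yielding $\chi(T_{\fat{\alpha}}) = 0$. These values match $\chi_{\textnormal{triv}}$.

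For Theorem \ref{poincare}, I would invoke classical Poincar\'e duality for the PD pro-$p$ group $I_1$, which produces a perfect pairing
$$\textnormal{H}^i(I_1,C)\otimes_C \textnormal{H}^{d-i}(I_1,C) \longrightarrow \textnormal{H}^d(I_1,C)\cong \chi_{\textnormal{triv}}.$$
The content of the theorem is that this pairing is $\cH$-equivariant in the sense that $\langle T_w\cdot x, y\rangle = \langle x, T_{w^{-1}}\cdot y\rangle$. Since the dual $\cH$-module structure on $\textnormal{H}^i(I_1,C)^*$ is induced by the anti-involution $T_w\mapsto T_{w^{-1}}$, this adjointness delivers the desired isomorphism of $\cH$-modules. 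To establish the adjointness I would decompose each $T_w$ over the double coset $I_1 g I_1$ and invoke the reciprocity between restriction and corestriction under Poincar\'e duality, noting that $I_1$ and $I_1\cap gI_1g^{-1}$ share the same top dimension so both duality pairings carry over faithfully.

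The main obstacle I anticipate lies in the second theorem: although each ingredient (the double-coset description of $\cH$, restriction, corestriction, PD pairing) is classical, verifying that they combine compatibly with the conjugation twists in the definition of $T_w$ requires careful bookkeeping. The formulas of Sections \ref{calcofxi} and \ref{frattinisect} should supply the necessary tools, but assembling them into a clean adjointness statement is the technical heart of the proof.
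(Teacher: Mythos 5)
Your high-level strategy matches the paper's: reduce Theorem \ref{htop} to evaluating the character on Iwahori--Matsumoto generators (vanishing of $\T_w$ for $\boldl(w)>0$ via the top-degree behavior of restriction to proper open subgroups, and the action of $\T_\omega$ via conjugation), and prove Theorem \ref{poincare} by establishing the adjointness $\varphi\cdot\T_w\smile\psi = \varphi\smile\psi\cdot\T_{w^{-1}}$ against the cup-product pairing. However, there are two genuine gaps, and they are precisely where the real work of the paper lives.

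First, for Theorem \ref{htop} you simply assert that conjugation by $\omega\in\widetilde\Omega$ ``should be trivial'' on the orientation class when $\Phi$ is irreducible. This is not a formality: it is the core of the proof. The paper proves it by passing to a deep congruence subgroup $I_m$ (for $m>e$) where corestriction to $I_1$ in top degree is an isomorphism commuting with $\omega_*^{-1}$, invoking Corollary \ref{frattinicor} to see $I_m$ is uniform so its cohomology ring is exterior on $\textnormal{H}^1$, and thereby reducing to computing $\det_{\bbF_p}(\omega\mid I_m/I_{m+e})$. That determinant is then shown to equal $(-1)^{(N+\boldl(\bo))ef}$ by carefully tracking the $\bo$-orbits on $\Phi$ (pairing $\alpha$ against $-\alpha$ and using $d_{w,\beta}=d_{w,-\beta}$), and the final step that $N+\boldl(\bo)$ is always even is verified case-by-case from the tables of Iwahori--Matsumoto. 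None of this is automatic from irreducibility, and your proposal does not supply it.

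Second, for Theorem \ref{poincare} you correctly identify that the obstacle is the interaction between the conjugation twist and the pairing, but you do not resolve it. In the paper's chain of equalities, for $w=\salpha$ one must slip $\salpha_*^{-1}$ out of a top-degree class on $J_1 := I_1\cap\salpha I_1\salpha^{-1}$; this requires knowing that $\salpha_*$ acts trivially on $\textnormal{H}^n(J_1,C)$. Importantly, this is a \emph{different} group than $I_1$, and Theorem \ref{htop} does not directly give it to you (indeed $\salpha$ does not normalize $I_1$). The paper runs a second determinant computation on $J_m/J_{m+e}$, parallel to but distinct from the one in Theorem \ref{htop}, exploiting that $J_m$ is also uniform for $m>e$; only after that does the projection-formula argument close. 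Your proposal identifies this as ``the technical heart'' but leaves it unaddressed, so as written the adjointness is not established.

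One minor point on the $\T_{\salpha}$ vanishing in Theorem \ref{htop}: your invocation of ``$\textnormal{cor}\circ\textnormal{res}$ is multiplication by the index'' needs a small gloss, since the Hecke operator is $\textnormal{cor}\circ\textnormal{conj}\circ\textnormal{res}$ between two a priori different conjugates of $I_1$; the cleaner statement, which the paper uses, is simply that restriction to a proper open subgroup of a Poincar\'e pro-$p$ group is zero in top degree. Either way the conclusion is correct.
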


\bigskip

\noindent \textbf{Acknowledgements}.  I would like to thank Florian Herzig for several useful conversations, and the anonymous referee for some helpful suggestions.  During the preparation of this article, funding was provided by NSF grant DMS-1400779 and an EPDI fellowship.

\bigskip

\section{Notation and Preliminaries}\label{notation}

\subsection{Basic Notation}

Let $p\geq 5$ denote a fixed prime number.  Let $F$ be a finite extension of $\qp$, with ring of integers $\fo$, maximal ideal $\fp$, uniformizer $\varpi$ and residue field $k_F$.  We denote by $q = p^f$ the order of the residue field $k_F$.  Given an element $x\in k_F$, we denote by $[x]\in \fo$ its Teichm\"uller lift.  Conversely, if $y\in \fo$, we denote by $\overline{y}\in k_F$ its image in the residue field.

If $\bH$ is an algebraic group over $F$, we denote by $H$ the group $\bH(F)$ of $F$-points.

Let $\bG$ denote a split, connected, reductive group over $F$.  We let $\bZ$ denote the connected center of $G$.  Fix a split maximal torus $\bT$, and let 
$$\langle-,-\rangle: X^*(\bT)\times X_*(\bT)\longrightarrow \bbZ$$ 
denote the natural perfect pairing between characters and cocharacters.  We define a homomorphism $\nu:T\longrightarrow X_*(\bT)$ by the condition
$$\langle\chi,\nu(t)\rangle = -\textnormal{val}(\chi(t))$$
for all $\chi\in X^*(\bT)$ and $t\in T$, and where $\textnormal{val}:F^\times\longrightarrow \bbZ$ is the normalized valuation.  Given $\lambda\in X_*(\bT)$, we have $\nu(\lambda(\varpi^{-1})) = \lambda$.

In the standard apartment corresponding to $\bT$ of the semisimple Bruhat--Tits building of $\bG$, we fix a chamber $\cC$ and a hyperspecial vertex $x_0$ such that $x_0\in \overline{\cC}$.  We let $\cP_{x_0}$ (resp. $I$) denote the parahoric subgroup corresponding to $x_0$ (resp. $\cC$).  We also define $I_1$ to be the pro-$p$-Sylow subgroup of $I$.  The group $I$ (resp. $I_1$) is the \emph{Iwahori subgroup} (resp. \emph{pro-$p$-Iwahori subgroup}) of $G$.  We will later assume that $I_1$ is torsion-free.  We have $\ker(\nu) = T\cap \cP_{x_0} = T \cap I =: T_0$, which is equal to the maximal compact subgroup of $T$.  Furthermore, we set $T_1:= T\cap I_1$; this is the maximal pro-$p$ subgroup of $T_0$.  Then $T_0/T_1$ identifies with the group of $k_F$-points of $\bT$, and we denote this group by $\bT(k_F)$ (note that the assumption that $\bG$ is split ensures $\bT$ has a model over $\fo$).

Denote the root system of $\bG$ with respect to $\bT$ by $\Phi \subset X^*(\bT)$.  We will later restrict to the case where $\Phi$ is irreducible.  We denote by $\Phi^\vee \subset X_*(\bT)$ the coroot system, and given $\alpha\in \Phi$ we denote by $\alpha^\vee\in \Phi^\vee$ the associated coroot.   For a root $\alpha\in \Phi$, we let $\bU_\alpha$ denote the associated root subgroup.  We fix a set of simple roots $\Pi$, and let $\Phi = \Phi^+ \sqcup \Phi^-$ be the decomposition defined by $\Pi$.  We let $\hgt:\Phi \longrightarrow \bbZ$ denote the length function on $\Phi$ with respect to $\Pi$.  Let $\bB$ denote the Borel subgroup corresponding to $\Pi$.  Further, we let $\bU$ denote the unipotent radical of $\bB$, so that $\bU$ is generated by $\bU_\alpha$ for all $\alpha\in \Phi^+$ and $\bB = \bT\ltimes\bU$.

\subsection{Weyl groups}

Let $W_0 := N_{\bG}(\bT)/\bT = N_G(T)/T$ denote the Weyl group of $\bG$.  For $\alpha\in \Phi$, we let $s_\alpha\in W_0$ denote the corresponding reflection (and note that $s_{-\alpha} = s_{\alpha}$).  Then the Coxeter group $W_0$ is generated by $\{s_\alpha\}_{\alpha \in \Pi}$.  We let $\boldl:W_0\longrightarrow \bbN$ denote the length function with respect to the set of simple reflections $\{s_\alpha\}_{\alpha\in \Pi}$.  For $\alpha\in \Phi$, the reflection $s_\alpha$ acts on $\chi\in X^*(\bT)$ (resp. $\lambda\in X_*(\bT)$) by the formula
\begin{eqnarray*}
s_\alpha(\chi) & = & \chi - \langle\chi,\alpha^\vee\rangle\alpha\\
(\textnormal{resp.}~s_{\alpha}(\lambda) & = & \lambda - \langle\alpha,\lambda\rangle\alpha^\vee).
\end{eqnarray*}

Next, we set 
$$\Lambda := T/T_0,\qquad  W := N_G(T)/T_0.$$
Note that the homomorphism $\nu:T\longrightarrow X_*(\bT)$ factors through $\Lambda$, and identifies $\Lambda$ with $X_*(\bT)$.  These groups fit into an exact sequence
$$1\longrightarrow \Lambda \longrightarrow W \longrightarrow W_0 \longrightarrow 1,$$
and the group $W$ acts on the standard apartment $X_*(\bT)/X_*(\bZ)\otimes_{\bbZ}\bbR$ (see \cite[Section I.1]{ss:reptheorysheaves}).  Since $x_0$ is hyperspecial we have $W_0 \cong (N_G(T)\cap \cP_{x_0})/(T\cap \cP_{x_0})$, which gives a section to the surjection $W\longtwoheadrightarrow W_0$.  We will always view $W_0$ as the subgroup of $W$ fixing $x_0$ via this section.  This gives the decomposition
$$W\cong W_0\ltimes \Lambda.$$
The length function $\boldl$ on $W_0$ extends to $W$ (see \cite[Corollary 5.10]{vigneras:hecke1}).

The set of affine roots is defined as $\Phi\times \bbZ$, with the element $(\alpha,\ell)$ taking the value $\alpha(\lambda) + \ell$ on $\lambda\in X_*(\bT)/X_*(\bZ)\otimes_{\bbZ}\bbR$.  We view $\Phi$ as a subset of $\Phi\times \bbZ$ via the identification $\Phi\cong \Phi\times \{0\}$.  \textbf{We assume that $x_0$ and $\cC$ are chosen so that every element of $\Phi$ takes the value $0$ on $x_0$ and every element of $\Phi^+$ is positive on $\cC$.}  We let 
$$\Pi_{\aff}:= \Pi \sqcup \bigsqcup_{i = 1}^d\{(-\alpha_{0}^{(i)}, 1)\}$$ 
denote the set of simple affine roots, where $\{\alpha_{0}^{(i)}\}_{i = 1}^d$ is the set of highest roots of the irreducible components of $\Phi$.

We will use boldface Greek letters to denote generic affine roots.  Given an affine root $\fat{\alpha} = (\alpha,\ell) \in \Phi\times\bbZ$, we let $s_{\fat{\alpha}}\in W$ denote the reflection in the affine hyperplane $\{\lambda\in X_*(\bT)/X_*(\bZ)\otimes_{\bbZ}\bbR: \alpha(\lambda) + \ell = 0\}$.   We define the affine Weyl group $W_\aff\subset W$ to be the subgroup generated by the set $\{s_{\fat{\alpha}}\}_{\fat{\alpha}\in \Pi_\aff}$.  The group $W_\aff$ is a Coxeter group (with respect to the generators $\{s_{\fat{\alpha}}\}_{\fat{\alpha}\in \Pi_\aff}$), and the restriction of $\boldl$ from $W$ to $W_\aff$ agrees with the length function of $W_\aff$ as a Coxeter group.  We also define $\Omega$ as the subgroup of elements of $W$ stabilizing $\cC$; equivalently, $\Omega$ is the subgroup of length 0 elements of $W$.  It is a finitely generated abelian group.  This gives the decomposition
$$W\cong W_\aff\rtimes\Omega.$$ 
The group $W$ acts on the affine roots, and the subgroup $\Omega$ stabilizes the subset $\Pi_\aff$.

We now set 
$$\tW := N_G(T)/T_1.$$
Given any subset $X$ of $W$, we let $\widetilde{X}$ denote its preimage in $\tW$ under the natural projection $\tW\longtwoheadrightarrow W$, so that $\widetilde{X}$ is an extension of $X$ by $\bT(k_F)$.  The length function $\boldl$ on $W$ inflates to $\tW$ via the projection, and similarly the homomorphism $\nu$ on $\Lambda$ inflates to $\widetilde{\Lambda}$.  For typographical reasons we write $\widetilde{X}_\square$ as opposed to $\widetilde{X_\square}$ if the symbol $X$ has some decoration $\square$.  Given some element $w\in W$ we often let $\widehat{w}\in \tW$ denote a specified choice of lift.  Furthermore, given $w\in \tW$, we let $\bar{w}$ denote the image of $w$ in $W_0$ via the projections $\tW\longtwoheadrightarrow W \longtwoheadrightarrow W_0$.

\subsection{Lifts of Weyl group elements}
Let $\bG_{x_0}$ denote the Bruhat--Tits $\fo$-group scheme with generic fiber $\bG$ associated to the point $x_0$ .  Since the group $\bG$ is split, we have a Chevalley system for $\bG$.  In particular, this means that for every $\alpha\in \Phi$, we have a homomorphism of $\fo$-group schemes (cf. \cite[Section 3.2]{bruhattits2})
$$\varphi_{\alpha}:\bS\bL_{2/\fo}\longrightarrow \bG_{x_0}.$$
We normalize $\varphi_\alpha$ as in \cite[Section II.1.3]{jantzen}.  We define $u_\alpha$ to be the homomorphism
\begin{eqnarray*}
u_{\alpha}:\bG_{\textnormal{a}/\fo} & \longrightarrow & \bG_{x_0}\\
x & \longmapsto & \varphi_{\alpha}\left(\begin{pmatrix}1 & x \\ 0 & 1\end{pmatrix}\right).
\end{eqnarray*}

We now define two sets of lifts of affine reflections.  Given a root $\alpha\in \Phi$, define 
$$\fs_{\alpha} := \varphi_\alpha\left(\begin{pmatrix}  0 & 1 \\ -1 & 0\end{pmatrix}\right)\in N_G(T).$$
(These elements are denoted $n_\alpha$ in \cite{springer}.)  We have 
$$\fs_\alpha^2 = \alpha^\vee(-1)\qquad\textnormal{and}\qquad\fs_\alpha^{-1} = \fs_{-\alpha} = \alpha^\vee(-1)\fs_\alpha.$$

On the other hand, given an affine root $\fat{\alpha} = (\alpha,\ell)\in \Phi\times\bbZ$, define
$$\varphi_{\fat{\alpha}}\left(\begin{pmatrix}a & b \\ c & d\end{pmatrix}\right) := \varphi_{\alpha}\left(\begin{pmatrix}a & \varpi^{\ell}b \\ \varpi^{-\ell}c & d\end{pmatrix}\right),$$
and 
$$u_{\fat{\alpha}}(x) := u_\alpha(\varpi^{\ell}x).$$  
Then, for every $\fat{\alpha} = (\alpha,\ell)\in \Pi_\aff$, we define
$$\salpha := \varphi_{\fat{\alpha}}\left(\begin{pmatrix}0 & 1 \\ -1 & 0\end{pmatrix}\right) \in N_G(T).$$
When $\fat{\alpha} = (\alpha,0)\in \Pi_\aff$, we simply write $\widehat{s_\alpha}$.  Note that
$$\salpha =  \varphi_{\alpha}\left(\begin{pmatrix}0 & \varpi^{\ell} \\ -\varpi^{-\ell} & 0\end{pmatrix}\right) = \alpha^\vee(\varpi^\ell)\fs_{\alpha} = \fs_\alpha\alpha^\vee(\varpi^{-\ell}).$$
In particular, if $\alpha\in \Pi$, then $\widehat{s_\alpha} = \fs_\alpha$.

Given $w\in W_0$ with reduced expression $w = s_{\alpha_1}\cdots s_{\alpha_k}$, $\alpha_i\in \Pi$, we define 
\begin{equation}\label{liftredexp}
\widehat{w} := \widehat{s_{\alpha_1}}\cdots\widehat{s_{\alpha_k}} = \fs_{\alpha_1}\cdots \fs_{\alpha_k} \in N_G(T),
\end{equation}
which is a lift of $w$.  By \cite[Propositions 8.3.3 and 9.3.2]{springer}, this expression is well-defined.  Note that if $\alpha\in \Phi\smallsetminus\Pi$, then $\widehat{s_\alpha}$ (computed using \eqref{liftredexp}) is not necessarily equal to $\fs_{\alpha}$.

\subsection{Structure constants}

Let us fix once and for all a total order on $\Phi$.  For two roots $\alpha,\beta\in \Phi$ with $\alpha\neq \pm \beta$, we have
\begin{equation}
\label{comm}
[u_\alpha(x),u_{\beta}(y)] := u_\alpha(x)u_\beta(y)u_{\alpha}(x)^{-1}u_{\beta}(y)^{-1} = \prod_{\sub{i,j > 0}{i\alpha + j\beta\in \Phi}}u_{i\alpha + j\beta}(c_{\alpha,\beta;i,j}x^iy^j),
\end{equation}
where $c_{\alpha,\beta;i,j}$ are some constants, and where the product is taken with respect to the fixed total order $\Phi$.  We also define constants $d_{\alpha,\beta}$ by
$$\fs_\alpha u_{\beta}(x)\fs_\alpha^{-1} = u_{s_\alpha(\beta)}(d_{\alpha,\beta}x).$$
Since the root morphisms $u_\alpha$ come from a Chevalley system and we assume $p\geq 5$, we have $c_{\alpha,\beta;i,j}, d_{\alpha,\beta} \in \fo^\times$ for all choices of $\alpha,\beta\in \Phi$, $i,j > 0$ for which the constants are nonzero.  More precisely, the constants satisfy $c_{\alpha,\beta; i,j}\in \bbZ\cap \fo^{\times}$ and $d_{\alpha,\beta} = \pm 1$ (\cite[Section 3.2]{bruhattits2}).  More generally, if $w\in W_0$ and $\beta\in \Phi$, we define constants $d_{w,\beta}$ by the condition
$$\widehat{w}u_\beta(x)\widehat{w}^{-1} = u_{w(\beta)}(d_{w,\beta}x).$$
Using a reduced expression for $w$, we see that $d_{w,\beta}= \pm 1$ and $d_{w,\beta} = d_{w,-\beta}$ (\cite[Lemma 9.2.2(ii)]{springer}).

\subsection{Miscallany}\subsubsection{} We record a decomposition for future use.  Let $a,d\in \fo^\times, b,c\in \fo$.  Then, in $\textnormal{SL}_2(F)$, we have the following Iwahori decompositions:
\begin{eqnarray*}
\begin{pmatrix}a & b\\ \varpi c & d\end{pmatrix} & = & \begin{pmatrix}1 & bd^{-1} \\ 0 & 1\end{pmatrix}\begin{pmatrix}d^{-1} & 0 \\ 0 & d\end{pmatrix}\begin{pmatrix}1 & 0 \\ \varpi cd^{-1} & 1\end{pmatrix}\\
 & = & \begin{pmatrix} 1 & 0 \\ \varpi ca^{-1} & 1\end{pmatrix}\begin{pmatrix}a & 0 \\ 0 & a^{-1}\end{pmatrix}\begin{pmatrix}1 & ba^{-1} \\ 0 & 1\end{pmatrix}.
\end{eqnarray*}
Applying the homomorphism $\varphi_{\fat{\alpha}}$ for $\fat{\alpha} = (\alpha,\ell)\in \Pi_\aff$ gives
\begin{eqnarray}
\varphi_{\fat{\alpha}}\left(\begin{pmatrix}a & b\\ \varpi c & d\end{pmatrix}\right) & = & u_{\fat{\alpha}}(bd^{-1})\alpha^\vee(d^{-1})u_{-\fat{\alpha}}(\varpi cd^{-1})\label{iwahori1}\\
 & = & u_{-\fat{\alpha}}(\varpi ca^{-1})\alpha^\vee(a)u_{\fat{\alpha}}(ba^{-1}).\label{iwahori2}
\end{eqnarray}

\subsubsection{} We fix for the remainder of the article an algebraically closed field $C$ of characteristic $p$.  This will serve as the field of coefficients for all representations and modules appearing.  We fix once and for all an embedding $k_F\longhookrightarrow C$, and view $k_F$ as a subfield of $C$.  To simplify notation, we will identify the groups of square roots of unity $\mu_2(F), \mu_2(k_F),$ and $\mu_2(C)$.

\subsubsection{} Given $\alpha\in \Phi$, we define a smooth character $\overline{\alpha}:T \longrightarrow k_F^\times \longhookrightarrow C^\times$ by
$$t\longmapsto \overline{\varpi^{-\textnormal{val}(\alpha(t))}\alpha(t)}.$$
In particular, the character $\overline{\alpha}$ restricted to $T_0$ is given by
$$t_0\longmapsto\overline{\alpha(t_0)},$$
and therefore does not depend on the choice of uniformizer.

\subsubsection{} Finally, if $A$ is some set, $a,b\in A$, and $A'\subset A$, we define
\begin{align*}
\mathbf{1}_{A'}(a)  :=  & \begin{cases}1 & \textnormal{if}~a\in A',\\ 0 & \textnormal{if}~a\not\in A',\end{cases}\\
\delta_{a,b} := \mathbf{1}_{\{b\}}(a)  = & \begin{cases}1 & \textnormal{if}~a = b,\\ 0 & \textnormal{if}~a\neq b.\end{cases}
\end{align*}

\bigskip

\section{Hecke algebras}\label{heckealg}

We review some facts related to pro-$p$-Iwahori--Hecke algebras.

\subsection{Definitions}\label{ssectdefs}  We let $\cH$ denote the pro-$p$-Iwahori--Hecke algebra of $G$ with respect to $I_1$ over $C$:
$$\cH:= \End_{G}\big(\cind_{I_1}^G(C)\big),$$
where $C$ denotes the trivial $I_1$-module over $C$ (see \cite{vigneras:hecke1} for details).  Using the adjunction isomorphism
$$\cH\cong \Hom_{I_1}(C,\cind_{I_1}^G(C)) = \cind_{I_1}^G(C)^{I_1},$$
we view $\cH$ as the convolution algebra of compactly supported, $C$-valued, $I_1$-biinvariant functions on $G$.  Given $g\in G$, we let $\T_g$ denote characteristic function of $I_1gI_1$. If $w\in \tW$ and $\dot{w}\in N_G(T)$ is a lift of $w$, then the double coset $I_1\dot{w}I_1$ does not depend on the choice of lift, and we will therefore write $\T_w$ for $\T_{\dot{w}}$.  The set $\{\T_w\}_{w\in \tW}$ then gives a basis of $\cH$.

The elements of $\cH$ satisfy braid relations and quadratic relations (see \cite{vigneras:hecke1}).  We will not need their precise definition here, but we only remark that they imply that $\cH$ is generated (as an algebra) by $\{\T_{\salpha}\}_{\fat{\alpha}\in \Pi_\aff}$ and $\{\T_{\omega}\}_{\omega\in \widetilde{\Omega}}$.

Finally, we define $\cH_\aff$ to be the $C$-vector subspace of $\cH$ which is spanned by $\{\T_w\}_{w\in \tW_\aff}$.  By the braid and quadratic relations, this forms a subalgebra of $\cH$, called the \emph{affine pro-$p$-Iwahori--Hecke algebra}.  The algebra $\cH_\aff$ is generated by $\{\T_{\salpha}\}_{\fat{\alpha}\in \Pi_\aff}$ and $\{\T_{t_0}\}_{t_0\in \bT(k_F)}$.  If $\Phi = \Phi^{(1)}\sqcup \ldots \sqcup \Phi^{(d)}$ is the decomposition of $\Phi$ into irreducible components, we get a corresponding decomposition of simple affine roots $\Pi_\aff = \Pi_\aff^{(1)}\sqcup \ldots \sqcup \Pi_\aff^{(d)}$.  For $1\leq i \leq d$, the subalgebras of $\cH_\aff$ generated by $\{\T_{\salpha}\}_{\fat{\alpha}\in \Pi_\aff^{(i)}}$ and $\{\T_{t_0}\}_{t\in \bT(k_F)}$ are called the \emph{irreducible components of $\cH_\aff$}.

\subsection{Supersingular $\cH$-modules}\label{ssingsect}
We first recall two characters of $\cH$.  

\begin{defn}
\begin{enumerate}[(a)]
\item The \emph{trivial and sign characters} $\chi_{\textnormal{triv}},\chi_{\textnormal{sign}}:\cH\longrightarrow C$ are defined by
$$\chi_{\textnormal{triv}}(\T_{w}) = q^{\boldl(w)} \qquad \textnormal{and}\qquad \chi_{\textnormal{sign}}(\T_{w}) = (-1)^{\boldl(w)},$$
respectively (using the convention that $0^0 = 1$).  
\item Given any subalgebra $\cA$ of $\cH$ (e.g., $\cH_\aff$, etc.), we define the \emph{trivial and sign characters of $\cA$} to be the restrictions of $\chi_{\textnormal{triv}}$ and $\chi_{\textnormal{sign}}$ to $\cA$.   
\item Given a character $\xi:\bT(k_F) \longrightarrow C^\times$ which satisfies $\xi\circ\alpha^\vee([x]) = 1$ for every $\alpha\in \Pi$ and every $x\in k_F^\times$, we define the \emph{twists of $\chi_{\textnormal{triv}}$ and $\chi_{\textnormal{sign}}$ by $\xi$} to be the characters of $\cH_\aff$ defined by
$$\begin{cases}(\xi\otimes\chi_{\textnormal{triv}})(\T_{t_0}) = \xi(t_0)^{-1} & \textnormal{if}~t_0\in \bT(k_F), \\ (\xi\otimes\chi_{\textnormal{triv}})(\T_{\salpha}) = 0 & \textnormal{if}~\fat{\alpha}\in \Pi_\aff,\end{cases}\qquad\textnormal{and}\qquad\begin{cases}(\xi\otimes\chi_{\textnormal{sign}})(\T_{t_0}) = \xi(t_0)^{-1} & \textnormal{if}~t_0\in \bT(k_F), \\ (\xi\otimes\chi_{\textnormal{sign}})(\T_{\salpha}) = -1 & \textnormal{if}~\fat{\alpha}\in \Pi_\aff.\end{cases}$$
\end{enumerate}
\end{defn}

We now move on to the notion of supersingularity.  We will not give the precise definition here, but simply content ourselves with the following equivalent formulation (cf. \cite[Corollary 6.13(2) and Theorem 6.15]{vigneras:hecke3}).  Recall that a supersingular character of $\cH_\aff$ is one whose restriction to each irreducible component of $\cH_\aff$ is different from a twist of the trivial or sign character.  Then a simple right $\cH$-module $\fm$ is supersingular if and only if the restriction of $\fm$ to $\cH_\aff$ contains a supersingular character.

\subsection{Hecke action on cohomology}\label{heckecoh}

We now discuss Hecke actions on (continuous) group cohomology.  For a reference, see \cite{lee:heckeaction}, \cite{kugaparrysah}, and \cite{rhiewhaples}.

Let $g\in G$, and let us write
\begin{equation}\label{xiforcoh}
I_1gI_1 = \bigsqcup_{x\in \cX} I_1g_x
\end{equation}
where $\cX$ is some finite index set.  Given $h\in I_1$, we have $g_xh\in I_1gI_1$, so we may write
$$g_xh = \xi_x(h)g_{x(h)}$$
for some $\xi_x(h)\in I_1$ and $x(h)\in \cX$ (we suppress the dependence on $g$ from notation).  If $h, h'\in I_1$, we have the cocycle conditions
\begin{equation}\label{cocycle}
x(hh') = x(h)(h'),\qquad\textnormal{and}\qquad \xi_x(hh') = \xi_x(h)\xi_{x(h)}(h').
\end{equation}

Suppose now that we have a smooth $G$-representation $\pi$ and an inhomogeneous cocycle $f\in \textnormal{Z}^i(I_1,\pi)$.  We define another cocycle $f\cdot\T_g$ by
\begin{equation}\label{actoncocyc}
(f\cdot\T_g)(h_1,\ldots, h_i) = \sum_{x\in \cX}g_x^{-1}.f\left(\xi_x(h_1),~ \xi_{x(h_1)}(h_2),~ \xi_{x(h_1h_2)}(h_3),~ \ldots~,~ \xi_{x(h_1\cdots h_{i - 1})}(h_i)\right).
\end{equation}
In particular, if $g$ normalizes $I_1$, this reduces to
$$(f\cdot\T_g)(h_1,\ldots, h_i) = g^{-1}.f\left(gh_1g^{-1},~ gh_2g^{-1},~ \ldots~,~ gh_ig^{-1}\right).$$
By passing to cohomology, equation \eqref{actoncocyc} gives an well-defined action of $\cH$ on $\textnormal{H}^i(I_1,\pi)$.

More functorially, the action of the operator $\T_g$ on $\textnormal{H}^i(I_1,\pi)$ is given by the composition
\begin{equation}\label{heckefunctorial}
\begin{tikzcd}
\quad\textnormal{H}^i(I_1,\pi)\quad \ar[r, "\textnormal{res}^{I_1}_{I_1\cap gI_1g^{-1}}"] &  \quad\textnormal{H}^i(I_1\cap gI_1g^{-1},\pi)\quad \ar[r, "g^{-1}_*"] & \quad\textnormal{H}^i(I_1\cap g^{-1}I_1g,\pi)\quad \ar[r, "\textnormal{cor}^{I_1}_{I_1\cap g^{-1}I_1g}"] & \quad\textnormal{H}^i(I_1,\pi),\quad
\end{tikzcd}
\end{equation}
where the first map is the restriction from $I_1$ to $I_1\cap gI_1g^{-1}$, the second map is the conjugation by $g^{-1}$, and the third map is the corestriction from $I_1\cap g^{-1}I_1g$ to $I_1$.

In this note, we will be concerned with the case where $\pi = C$, the trivial one-dimensional representation of $G$ over $C$.  It follows easily from the definitions that $\textnormal{H}^0(I_1,C) \cong \chi_{\textnormal{triv}}$; we will examine the higher cohomology groups in the sections below.

\bigskip

\section{Calculations for Hecke action}\label{calcofxi}

In this section we calculate the maps $\xi_x(h)$ and $x(h)$ which appear in the action of $\cH$ on cohomology.  Since $\cH$ is generated by $\{\T_{\salpha}\}_{\fat{\alpha}\in \Pi_\aff}$ and $\{\T_\omega\}_{\omega\in \widetilde{\Omega}}$, and the action of $\T_{\omega}$ is given by the simplified version of equation \eqref{actoncocyc}, it suffices to understand $\xi_x(h)$ and $x(h)$ when $g = \salpha$.

Taking $g = \salpha$ in the decomposition \eqref{xiforcoh} gives
$$I_1\widehat{s_{\fat{\alpha}}}I_1 = \bigsqcup_{x\in k_F}I_1\widehat{s_{\fat{\alpha}}}u_{\fat{\alpha}}([x]).$$
Therefore, our index set $\cX$ is $k_F$, and for $x\in k_F$, we have $g_x = \salpha u_{\fat{\alpha}}([x])$.  Since $I_1$ is generated by $T_1$ and $u_{\fat{\beta}}(\fo)$ for all $\fat{\beta} \in \Phi^+\times\{0\} \sqcup \Phi^-\times\{1\}$, by equations \eqref{cocycle} it suffices to understand $\xi_x(t), \xi_x(u_{\fat{\beta}}(y)), x(t)$, and $x(u_{\fat{\beta}}(y))$, where $t\in T_1, \fat{\beta}\in \Phi^+\times\{0\} \sqcup \Phi^-\times\{1\}$ and $y\in \fo$.

\begin{lemma}
\label{conj}
Let $\fat{\alpha} = (\alpha,\ell)\in  \Pi_\aff$, $\fat{\beta} = (\beta,m)\in \Phi^+\times\{0\}\sqcup \Phi^-\times\{1\}$ and $x\in k_F, y\in \fo$.  We have:
\begin{enumerate}[(a)]
\item if $\fat{\alpha} = \fat{\beta}$, then
\begin{eqnarray*}
x(u_{\fat{\alpha}}(y)) & = & x + \overline{y},\\
\xi_x(u_{\fat{\alpha}}(y)) & = & u_{-\fat{\alpha}}([x + \overline{y}] - [x] - y);
\end{eqnarray*}\label{conja}
\item if $\fat{\alpha} \neq \fat{\beta}$ and $\alpha\neq -\beta$, then
\begin{eqnarray*}
x(u_{\fat{\beta}}(y)) & = & x,\\
\xi_x(u_{\fat{\beta}}(y)) & = & \left(\prod_{\sub{i,j > 0}{i\alpha + j\beta\in \Phi}}u_{s_\alpha(i\alpha + j\beta)}\left(d_{\alpha,i\alpha + j\beta}c_{\alpha,\beta;i,j}\varpi^{-i\ell + jm - j\ell\langle \beta,\alpha^\vee\rangle}[x]^iy^j\right)\right) \cdot ~u_{s_\alpha(\beta)}\left(d_{\alpha,\beta}\varpi^{m - \ell\langle\beta,\alpha^\vee\rangle}y\right);
\end{eqnarray*}\label{conjb}
\item if $\fat{\alpha}\neq\fat{\beta}$ and $\alpha = -\beta$, then
\begin{eqnarray*}
x(u_{\fat{\beta}}(y)) & = & x,\\
\xi_x(u_{\fat{\beta}}(y)) & = & u_{\fat{\alpha}}(-\varpi y\nu^{-1})\alpha^\vee(\nu^{-1})u_{\fat{\beta}}([x]^2y\nu^{-1})\\
 & = & u_{\fat{\beta}}([x]^2y\nu'^{-1})\alpha^\vee(\nu')u_{\fat{\alpha}}(-\varpi y\nu'^{-1}),
\end{eqnarray*}
where $\nu := 1 + \varpi[x]y$ and $\nu' := 1 - \varpi[x]y$; \label{conjc}
\item for $t\in T_1$, we have
\begin{eqnarray*}
x(t) & = & x,\\ 
\xi_x(t) & = & t^{s_\alpha}u_{-\fat{\alpha}}\left((1 - \alpha(t)^{-1})[x]\right),
\end{eqnarray*}
where $t^{s_\alpha} := \salpha t\salpha^{-1}$.  \label{conjd}
\end{enumerate}
\end{lemma}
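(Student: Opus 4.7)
The plan is to unwind the defining identity $g_x h = \xi_x(h)\,g_{x(h)}$ and compute $\xi_x(h) = g_x h g_{x(h)}^{-1}$ directly in each of the four cases, where $g_x = \widehat{s_{\fat{\alpha}}}u_{\fat{\alpha}}([x])$; the identification of $x(h)$ is then forced by the requirement that the result lie in $I_1$. Two identities will be used throughout: $\widehat{s_{\fat{\alpha}}} = \alpha^\vee(\varpi^\ell)\fs_\alpha$ (from the displayed identity after \eqref{liftredexp}) and $\langle s_\alpha(\gamma),\alpha^\vee\rangle = -\langle\gamma,\alpha^\vee\rangle$.

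Case (a) follows from the factorization $u_{\fat{\alpha}}([x] + y) = u_{\fat{\alpha}}([x + \overline{y}])u_{\fat{\alpha}}(z)$ with $z := [x] + y - [x + \overline{y}]\in \fp$; since all $u_{\fat{\alpha}}$-elements commute, one obtains $g_xu_{\fat{\alpha}}(y) = (\widehat{s_{\fat{\alpha}}}u_{\fat{\alpha}}(z)\widehat{s_{\fat{\alpha}}}^{-1})\cdot g_{x + \overline{y}}$, and the outer conjugate is computed by pushing through $\varphi_{\fat{\alpha}}$ using the $\textnormal{SL}_2$ identity $\sm{0}{1}{-1}{0}\sm{1}{z}{0}{1}\sm{0}{-1}{1}{0} = \sm{1}{0}{-z}{1}$ and \eqref{iwahori1} (legitimate because $z\in \fp$), producing $u_{-\fat{\alpha}}(-z)$. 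Case (d) is similar: commuting $t$ past $u_{\fat{\alpha}}([x])$ via $u_{\fat{\alpha}}(w)t = t\,u_{\fat{\alpha}}(\alpha(t)^{-1}w)$ yields $u_{\fat{\alpha}}([x])t\,u_{\fat{\alpha}}(-[x]) = t\cdot u_{\fat{\alpha}}((\alpha(t)^{-1} - 1)[x])$, and since $t\in T_1$ the coefficient $\alpha(t)^{-1} - 1$ lies in $\fp$, so conjugation by $\widehat{s_{\fat{\alpha}}}$ reuses the case-(a) computation while turning the outer $t$ into $t^{s_\alpha}$.

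Case (b) uses the commutator formula \eqref{comm} (permissible because $\alpha\neq\pm\beta$) to expand
$$u_{\fat{\alpha}}([x])u_{\fat{\beta}}(y)u_{\fat{\alpha}}(-[x]) = \Bigl(\prod_{\substack{i,j > 0\\ i\alpha + j\beta\in \Phi}}u_{i\alpha + j\beta}(c_{\alpha,\beta;i,j}\varpi^{i\ell + jm}[x]^iy^j)\Bigr)\cdot u_{\fat{\beta}}(y).$$
Each factor $u_\gamma(w)$ is then conjugated by $\widehat{s_{\fat{\alpha}}} = \alpha^\vee(\varpi^\ell)\fs_\alpha$: $\fs_\alpha$-conjugation introduces the sign $d_{\alpha,\gamma}$ and swaps $\gamma\mapsto s_\alpha(\gamma)$, while torus-conjugation by $\alpha^\vee(\varpi^\ell)$ multiplies the argument by $\varpi^{\ell\langle s_\alpha(\gamma),\alpha^\vee\rangle} = \varpi^{-\ell\langle \gamma,\alpha^\vee\rangle}$. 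For $\gamma = i\alpha + j\beta$, $\langle\gamma,\alpha^\vee\rangle = 2i + j\langle\beta,\alpha^\vee\rangle$, producing the total exponent $i\ell + jm - 2i\ell - j\ell\langle\beta,\alpha^\vee\rangle = -i\ell + jm - j\ell\langle\beta,\alpha^\vee\rangle$; the $\gamma = \beta$ factor yields exponent $m - \ell\langle\beta,\alpha^\vee\rangle$. The membership $\xi_x(u_{\fat{\beta}}(y))\in I_1$, which forces $x(u_{\fat{\beta}}(y)) = x$, follows from the standard fact that the simple affine reflection $s_{\fat{\alpha}}$ permutes the positive affine roots distinct from $\fat{\alpha}$.

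Case (c) reduces to an explicit $\textnormal{SL}_2$ computation via $\varphi_{\fat{\alpha}}$. A brief check in the two subcases $\fat{\alpha}\in\Pi$ (so $\fat{\beta} = (-\alpha,1)$) and $\fat{\alpha} = (-\alpha_0^{(i)},1)$ (so $\fat{\beta} = (\alpha_0^{(i)},0)$) yields the identification $u_{\fat{\beta}}(z) = u_{-\fat{\alpha}}(\varpi z) = \varphi_{\fat{\alpha}}\sm{1}{0}{\varpi z}{1}$ in both subcases. Direct matrix multiplication inside $\varphi_{\fat{\alpha}}(\textnormal{SL}_2)$ produces
$$\widehat{s_{\fat{\alpha}}}u_{\fat{\alpha}}([x])u_{\fat{\beta}}(y)u_{\fat{\alpha}}(-[x])\widehat{s_{\fat{\alpha}}}^{-1} = \varphi_{\fat{\alpha}}\begin{pmatrix}\nu' & -\varpi y\\ \varpi[x]^2y & \nu\end{pmatrix},$$
with $\nu = 1 + \varpi[x]y$ and $\nu' = 1 - \varpi[x]y$, and applying the two Iwahori factorizations \eqref{iwahori1}, \eqref{iwahori2} to this matrix yields the two stated expressions. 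The main obstacle throughout is purely computational bookkeeping: the pairing of the torus-exponent $-\ell\langle\gamma,\alpha^\vee\rangle$ with each summand of the commutator expansion in (b), and the uniform handling of the identification $u_{\fat{\beta}} = u_{-\fat{\alpha}}(\varpi\,\cdot)$ across the two subcases of $\fat{\alpha}\in\Pi_{\aff}$ in (c); everything else is routine manipulation in the two subgroups $T$ and $\varphi_{\fat{\alpha}}(\textnormal{SL}_2)$.
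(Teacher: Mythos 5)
Your proposal is correct and follows essentially the same route as the paper's proof: all four cases reduce to the same matrix identities inside $\varphi_{\fat{\alpha}}(\mathrm{SL}_2)$, the same use of the commutator formula \eqref{comm}, and the same bookkeeping of the torus exponent via $\langle s_\alpha(\gamma),\alpha^\vee\rangle = -\langle\gamma,\alpha^\vee\rangle$, with the only difference being that you systematically conjugate $h$ by $g_x$ and re-multiply, whereas the paper moves $\widehat{s_{\fat{\alpha}}}u_{\fat{\alpha}}([x])$ past $h$ in one pass. Your explicit justification in case (b) that $\xi_x(u_{\fat{\beta}}(y))\in I_1$ (via the fact that $s_{\fat{\alpha}}$ permutes the positive affine roots other than $\fat{\alpha}$) is a point the paper leaves implicit, and is a welcome addition.
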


\begin{proof}
\begin{enumerate}[(a)]
\item We have 
\begin{eqnarray*}
\salpha u_{\fat{\alpha}}([x] + y) & = & \varphi_{\fat{\alpha}}\left(\begin{pmatrix} 0 & 1 \\ -1 & 0\end{pmatrix}\begin{pmatrix}1 & [x] + y \\ 0 & 1\end{pmatrix}\right)\\
& = & \varphi_{\fat{\alpha}}\left(\begin{pmatrix}1 & 0 \\ [x + \overline{y}] - [x] - y & 1\end{pmatrix}\begin{pmatrix} 0 & 1 \\ -1 & 0\end{pmatrix}\begin{pmatrix}1 & [x + \overline{y}] \\ 0 & 1\end{pmatrix}\right)\\
& = & u_{-\fat{\alpha}}([x + \overline{y}] - [x] - y)\salpha u_{\fat{\alpha}}([x + \overline{y}]).
\end{eqnarray*}

\item We have
\begin{eqnarray*}
\salpha u_{\fat{\alpha}}([x])u_{\fat{\beta}}(y) & = & \fs_\alpha \alpha^\vee(\varpi)^{-\ell}u_{\alpha}(\varpi^\ell[x])u_\beta(\varpi^my)\\
 & \stackrel{\eqref{comm}}{=} & \fs_\alpha \alpha^\vee(\varpi)^{-\ell} \left(\prod_{\sub{i,j > 0}{i\alpha + j\beta\in \Phi}}u_{i\alpha + j\beta}\left(c_{\alpha,\beta;i,j}\varpi^{i\ell + jm}[x]^iy^j\right)\right)u_\beta(\varpi^my)u_{\alpha}(\varpi^\ell[x])\\
  & = & \fs_\alpha \left(\prod_{\sub{i,j > 0}{i\alpha + j\beta\in \Phi}}u_{i\alpha + j\beta}\left(c_{\alpha,\beta;i,j}\varpi^{-i\ell + jm - j\ell\langle \beta,\alpha^\vee\rangle}[x]^iy^j\right)\right) \\
  & & \cdot ~u_\beta\left(\varpi^{m - \ell\langle\beta,\alpha^\vee\rangle}y\right)\alpha^\vee(\varpi)^{-\ell}u_{\alpha}(\varpi^\ell[x])\\
  & = & \left(\prod_{\sub{i,j > 0}{i\alpha + j\beta\in \Phi}}u_{s_\alpha(i\alpha + j\beta)}\left(d_{\alpha,i\alpha + j\beta}c_{\alpha,\beta;i,j}\varpi^{-i\ell + jm - j\ell\langle \beta,\alpha^\vee\rangle}[x]^iy^j\right)\right) \\
  & & \cdot ~u_{s_\alpha(\beta)}\left(d_{\alpha,\beta}\varpi^{m - \ell\langle\beta,\alpha^\vee\rangle}y\right)\salpha u_{\fat{\alpha}}([x]).
\end{eqnarray*}

\item By assumption, we may write $\fat{\beta} = (-\alpha,1 - \ell)$.  Set $\nu := 1 + \varpi[x]y$, $\nu':= 1 - \varpi[x]y$.  We have 
\begin{eqnarray*}
\salpha u_{\fat{\alpha}}([x])u_{\fat{\beta}}(y) & = &  \varphi_{\fat{\alpha}}\left(\begin{pmatrix} 0 & 1\\ -1 & 0 \end{pmatrix}\begin{pmatrix} 1 & [x] \\ 0 & 1 \end{pmatrix}\begin{pmatrix} 1 & 0 \\ \varpi y & 1 \end{pmatrix}\right)\\
 & = & \varphi_{\fat{\alpha}}\left(\begin{pmatrix}\nu' & -\varpi y\\ \varpi[x]^2y & \nu\end{pmatrix}\begin{pmatrix} 0 & 1\\ -1 & 0 \end{pmatrix}\begin{pmatrix} 1 & [x] \\ 0 & 1 \end{pmatrix}\right)\\
 & \stackrel{\eqref{iwahori1}}{=} & u_{\fat{\alpha}}(-\varpi y\nu^{-1})\alpha^\vee(\nu^{-1})u_{-\fat{\alpha}}(\varpi[x]^2y\nu^{-1})\salpha u_{\fat{\alpha}}([x])\\
 & \stackrel{\eqref{iwahori2}}{=} & u_{-\fat{\alpha}}(\varpi[x]^2y\nu'^{-1})\alpha^\vee(\nu')u_{\fat{\alpha}}(-\varpi y\nu'^{-1})\salpha u_{\fat{\alpha}}([x]).
\end{eqnarray*}

\item We have
\begin{eqnarray*}
\salpha u_{\fat{\alpha}}([x])t & = &  t^{s_\alpha}\salpha u_{\fat{\alpha}}(\alpha(t)^{-1}x)\\
 & = & t^{s_\alpha}\salpha u_{\fat{\alpha}}\left((\alpha(t)^{-1} - 1)[x]\right)u_{\fat{\alpha}}([x])\\
 & = & t^{s_\alpha} u_{-\fat{\alpha}}\left((1 - \alpha(t)^{-1})[x]\right)\salpha u_{\fat{\alpha}}([x]).
\end{eqnarray*}
\end{enumerate}
\end{proof}

\bigskip

\section{Calculation of Frattini quotients}\label{frattinisect}

In this section we calculate Frattini quotients of some subgroups appearing in the sequel.  For a profinite group $J$, we let $[J,J]$ denote the closed subgroup generated by all commutators, and let $J^{\textnormal{ab}} = J/[J,J]$ denote the abelianization.  Recall that we may write the pro-$p$-Iwahori subgroup $I_1$ in terms of generators as
$$I_1 = \left\langle T_1, u_{\alpha}(\fo),u_{\beta}(\fp): \alpha\in \Phi^+,\beta\in \Phi^-\right\rangle.$$

\begin{lemma}\label{i1ab}
We have
$$I_1^{\textnormal{ab}} = \overline{T}_1 \oplus \bigoplus_{\fat{\alpha}\in \Pi_{\aff}}u_{\fat{\alpha}}(\fo/\fp),$$
where $\overline{T}_1 := T_1/\langle\alpha^\vee(1 + \fp)\rangle_{\alpha\in\Phi}$.  
\end{lemma}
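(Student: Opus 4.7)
The plan is to realize the right-hand side as a quotient of $I_1$ and show the quotient coincides with $I_1^{\textnormal{ab}}$. Recall that $I_1$ is topologically generated by $T_1$ together with the root subgroups $u_\alpha(\fo)$ for $\alpha\in\Phi^+$ and $u_\beta(\fp)$ for $\beta\in\Phi^-$. Sending each generator to its simple-affine-root image produces an obvious surjection from the right-hand side onto $I_1^{\textnormal{ab}}$, and the work is to show this map is (i) well-defined and (ii) injective.

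For (i), we must verify that the following subsets all lie in $[I_1,I_1]$: (a) $u_\alpha(\fo)$ for $\alpha\in\Phi^+$ not simple, and $u_\beta(\fp)$ for $\beta\in\Phi^-$ with $-\beta$ not a highest root; (b) $u_\alpha(\fp)$ for $\alpha\in\Pi$, and $u_{-\alpha_0^{(i)}}(\fp^2)$ for each $i$; (c) $\alpha^\vee(1+\fp)$ for each $\alpha\in\Phi$. For (a), I would apply the Chevalley commutator formula \eqref{comm}: if a positive affine root $\fat{\gamma}$ is not simple, write $\fat{\gamma}=\fat{\alpha}+\fat{\beta}$ with $\fat{\alpha},\fat{\beta}$ positive affine roots of strictly smaller affine height; the commutator $[u_{\fat{\alpha}}(x),u_{\fat{\beta}}(y)]$ equals $u_{\fat{\gamma}}(c\cdot xy)$ modulo root subgroups of strictly greater affine height, with $c\in\fo^\times$ thanks to $p\geq 5$. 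A descending induction on affine height---with base case the highest roots, where no correction terms arise---then kills all non-simple affine root contributions. For (b), the torus commutator $[t,u_{\fat{\alpha}}(x)]=u_{\fat{\alpha}}((\alpha(t)-1)x)$ applied to $t=\alpha^\vee(1+\varpi)$ gives $\alpha(t)-1=2\varpi+\varpi^2$, which has valuation exactly one (since $\langle\alpha,\alpha^\vee\rangle=2$ and $p\geq 5$), so conjugation by $t$ drops the $\fp$-filtration on $u_{\fat{\alpha}}$ by one step. For (c), the $\textnormal{SL}_2$ Iwahori decomposition \eqref{iwahori1} applied to $[u_\alpha(1),u_{-\alpha}(y)]$ for $y\in\fp$ shows this commutator is congruent to $\alpha^\vee(1-y)^{-1}$ modulo $[I_1,I_1]$, since the accompanying $u_{\pm\alpha}$-factors are disposed of by (a) and (b).

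For (ii), I would construct left-inverses $I_1^{\textnormal{ab}}\to k_F$ for each summand $u_{\fat{\alpha}}(\fo/\fp)$ and $I_1^{\textnormal{ab}}\to\overline{T}_1$. For $\alpha\in\Pi$, composing the reduction $\cP_{x_0}\twoheadrightarrow\bG(k_F)$ with the abelianization $\bU(k_F)\twoheadrightarrow\bU(k_F)^{\textnormal{ab}}\cong\bigoplus_{\alpha\in\Pi}k_F$ and projection to the $\alpha$-coordinate extracts the $u_\alpha(\fo/\fp)$-summand. For $\fat{\alpha}=(-\alpha_0^{(i)},1)$, one uses instead the parahoric attached to the facet fixed by $s_{\fat{\alpha}}$ and an analogous projection through its rank-one reductive quotient. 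The projection onto $\overline{T}_1$ comes from the Iwahori factorization of $I_1$ composed with $T_1\twoheadrightarrow T_1/\langle\alpha^\vee(1+\fp)\rangle_{\alpha\in\Phi}=\overline{T}_1$.

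The hard part will be (ii): producing genuine group homomorphisms $I_1\to k_F$ (or $I_1\to\overline{T}_1$) that isolate a single summand coordinate is delicate precisely because the Iwahori decomposition is only a set-theoretic factorization, not a group-theoretic one. The cleanest implementation routes through the Moy--Prasad filtration of $I_1$ at the appropriate depth, using the Chevalley structure of nearby parahorics to produce the desired projection maps; step (i) ensures these factor through $I_1^{\textnormal{ab}}$, and step (ii) ensures they are collectively injective.
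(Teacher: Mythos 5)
Your step (i) is essentially the first half of the paper's proof, but your step (ii) is not how the paper proceeds, and it is where the genuine gap lies. The left-inverse strategy does work for the root summands: the reductive quotient of the panel parahoric attached to $\fat{\alpha}\in\Pi_\aff$ kills $T_1$ and every other first-shell root subgroup and restricts to an isomorphism $u_{\fat{\alpha}}(\fo/\fp)\cong k_F$, so you do get honest homomorphisms $I_1\to k_F$ isolating each $u_{\fat{\alpha}}$-coordinate. But after applying these, injectivity reduces to precisely the statement $T_1\cap[I_1,I_1]\subseteq\langle\alpha^\vee(1+\fp)\rangle_{\alpha\in\Phi}$, and for this you produce nothing: the Iwahori factorization followed by $T_1\twoheadrightarrow\overline{T}_1$ is, as you admit, only set-theoretic, and the appeal to Moy--Prasad cannot repair it --- the positive-depth graded pieces of $I_1$ consist of root-group and congruence-torus quotients and none of them receives $\overline{T}_1$, while characters through the cocenter $\bG\to\bG/\bG^{\textnormal{der}}$ lose the $p$-torsion of $\pi_1(\bG)$ (for $\bG=\textnormal{PGL}_p$ there is no torus-valued homomorphism of $I_1$ restricting to the full projection $T_1\twoheadrightarrow\overline{T}_1$ that you could construct independently of the lemma). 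The paper avoids left inverses altogether: it introduces the explicit subgroup $J$ generated by $\gamma^\vee(1+\fp)$, $u_\alpha(\fo)$ for non-simple $\alpha\in\Phi^+$, $u_\beta(\fp)$ for non-lowest $\beta\in\Phi^-$, and $u_{-\alpha_0}(\fp^2)$; your step (i) is exactly $J\subseteq[I_1,I_1]$, and the missing half is the reverse inclusion $[I_1,I_1]\subseteq J$, obtained by checking with the same commutator formulas that all commutators of the generators of $I_1$ lie in $J$, so that $I_1/J$ is abelian; the direct-sum description of $I_1/J$ is then immediate from the Iwahori factorization because $J$ is generated by pieces adapted to it. Without this (or an actual homomorphism $I_1\to\overline{T}_1$), the torus summand of your injectivity claim is unproved.

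There is also a repairable defect in your (i)(a): ``descending induction on affine height with base case the highest roots'' is not well-founded, since the affine root system has no highest root, and the correction terms of the commutator formula can leave the set $\Phi^+\times\{0\}\sqcup\Phi^-\times\{1\}$ over which you induct. For instance in type $B_2$ (short simple $\beta$, long simple $\gamma$), decomposing $(-\beta,1)=(\gamma,0)+(-\beta-\gamma,1)$ produces a correction term in $u_{-2\beta-\gamma}(\fp^2)$, of affine height larger than anything in the first shell; your (b) rescues this only because $-2\beta-\gamma$ happens to be the lowest root. The paper's organization closes the loop cleanly: it first proves via torus commutators that $u_\gamma(\fp)\subset[I_1,I_1]$ for \emph{all} $\gamma\in\Phi^+$ and $u_\gamma(\fp^2)\subset[I_1,I_1]$ for \emph{all} $\gamma\in\Phi^-$ (not just for the simple affine roots, as in your (b)), after which every higher correction term is absorbed automatically and the induction can be run inside the finite root system, descending on $\hgt$ separately for the positive roots at level $\fo$ and the negative roots at level $\fp$. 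You should restructure (i) accordingly, and replace (ii) by the reverse inclusion $[I_1,I_1]\subseteq J$.
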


\begin{rmk}
See also \cite[Section 3.8]{ollivierschneider:torsion} and \cite{cornutray}.
\end{rmk}

\begin{proof}
By working one irreducible component of $\Phi$ at a time, it suffices to prove the lemma when $\Phi$ is irreducible.  Let $\alpha_0$ denote the highest root of $\Phi$, and define $J$ as the closed subgroup of $I_1$ given by
$$\left\langle\gamma^\vee(1 + \fp),~ u_\alpha(\fo),~ u_{\beta}(\fp),~ u_{-\alpha_0}(\fp^2):\begin{array}{l}\diamond~ \gamma\in \Phi,\\ \diamond~ 1 < \hgt(\alpha)\leq \hgt(\alpha_0)\\ \diamond~ \hgt(-\alpha_0) < \hgt(\beta) \leq -1~\textnormal{or}~\hgt(\beta) = 1. \end{array}\right\rangle$$
We will prove that $J = [I_1,I_1]$.  

\begin{enumerate}[(a)]
\item Take $\alpha \in \Phi^+, y\in \fo$ and $t\in T_1$.  We have
$$[t,u_{\alpha}(y)] = u_{\alpha}((\alpha(t) - 1)y)\in u_{\alpha}(\fp).$$
Since $p\geq 5$, taking $t = \alpha^\vee(x)$ for $x\in 1 + \fp$, $x\neq 1$, gives $\alpha(t) - 1 = x^2 - 1\in \fp\smallsetminus\fp^2$.  Thus, we obtain $u_{\alpha}(\fp)\subset [I_1,I_1]$.  Similarly, taking $\alpha\in \Phi^-$ and $y\in \fp$, we have $u_{\alpha}(\fp^2)\subset [I_1,I_1]$.

\item  If $\hgt(\alpha_0) > 1$ (that is, if $\Phi$ is not of type $A_1$), we may choose $\alpha,\beta\in \Phi^+$ such that $\alpha + \beta = \alpha_{0}$.  Taking $x,y\in \fo$, equation \eqref{comm} now gives
$$[u_\alpha(x),u_{\beta}(y)] = u_{\alpha_{0}}(c_{\alpha,\beta;1,1}xy)$$
which implies that $u_{\alpha_{0}}(\fo)\subset [I_1,I_1]$ since $c_{\alpha,\beta;1,1}\in \fo^\times$.  Using equation \eqref{comm} and descending induction on height we obtain $u_\alpha(\fo)\subset [I_1,I_1]$ for all $\alpha\in \Phi^+$ with $1 < \hgt(\alpha) \leq \hgt(\alpha_{0})$.

Assume again that the root system $\Phi$ is not of type $A_1$, and suppose $\hgt(\alpha) = -1$.  By the assumption on the root system, there exists $\beta\in \Pi$ such that $\alpha - \beta\in \Phi^-$.  Taking $x\in \fp,y\in\fo$, we get
$$[u_{\alpha - \beta}(x),u_{\beta}(y)] = \prod_{\sub{i,j > 0}{i(\alpha - \beta) + j\beta\in \Phi}}u_{i(\alpha - \beta) + j\beta}(c_{\alpha - \beta,\beta;i,j}x^iy^j).$$
When $(i,j) = (1,1)$, we get the term $u_{\alpha}(c_{\alpha - \beta,\beta;1,1}xy)\in u_{\alpha}(\fp)$.  If $(i,j)$ is any other pair of integers appearing in the product, we must have $i \geq 2$, and therefore $u_{i(\alpha - \beta) + j\beta}(c_{\alpha - \beta,\beta;i,j}x^iy^j)\in u_{i(\alpha - \beta) + j\beta}(\fp^2)\subset [I_1,I_1]$ (by part (a)).  We conclude that $u_\alpha(\fp)\subset [I_1,I_1]$.  Proceeding again by descending induction and using the same considerations as above, we obtain $u_{\alpha}(\fp)\subset [I_1,I_1]$ for all $\alpha\in \Phi^-$ with $\hgt(-\alpha_0) < \hgt(\alpha) \leq - 1$.

\item  We have, for $x\in \fp, y\in \fo$ and $\alpha\in \Phi^+$, 
\begin{eqnarray*}
[u_{-\alpha}(x), u_{\alpha}(y)] & = & \varphi_{\alpha}\left(\begin{pmatrix}1 - xy & xy^2\\ - x^2y & (1 + xy)^2 - xy\end{pmatrix}\right)\\
 & \stackrel{\eqref{iwahori2}}{=} & u_{-\alpha}(-x^2y(1 - xy)^{-1})\alpha^\vee(1 - xy)u_{\alpha}(xy^2(1 - xy)^{-1})\\
 & \in & [I_1,I_1]\alpha^\vee(1 - xy)[I_1,I_1].
\end{eqnarray*}
Therefore, we see that $\alpha^{\vee}(1 + \fp)\subset [I_1,I_1]$.  
\end{enumerate}

Points (a)--(c) imply that $J\subset [I_1,I_1]$, and that $J$ is normal in $I_1$.  Moreover, the commutator formula \eqref{comm} shows that $I_1/J$ is abelian, and equal to 
$$\overline{T}_1 \oplus \bigoplus_{\fat{\alpha}\in \Pi_{\aff}}u_{\fat{\alpha}}(\fo/\fp).$$
This gives $[I_1,I_1]\subset J$.  
\end{proof}

\begin{cor}
The Frattini quotient of $I_1$ is isomorphic to
$$\overline{T}_1/\overline{T}_1^p\oplus \bigoplus_{\fat{\alpha}\in \Pi_{\aff}}u_{\fat{\alpha}}(\fo/\fp).$$
\end{cor}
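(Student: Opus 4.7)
The plan is to deduce this corollary directly from Lemma \ref{i1ab}. Recall that for a pro-$p$ group $J$, the Frattini subgroup is $\Phi(J) = \overline{[J,J]J^p}$, so the Frattini quotient $J/\Phi(J)$ is the largest elementary abelian $p$-quotient of $J$. Equivalently, $J/\Phi(J) \cong J^{\textnormal{ab}}/(J^{\textnormal{ab}})^p$. This is the only input I need beyond what is already proved.

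I would then apply this observation to $I_1$, which is a pro-$p$ group by construction. Lemma \ref{i1ab} gives
\[
I_1^{\textnormal{ab}} \;=\; \overline{T}_1 \oplus \bigoplus_{\fat{\alpha}\in \Pi_{\aff}}u_{\fat{\alpha}}(\fo/\fp),
\]
so it remains to identify the $p$-th power subgroup in each summand. The key observation is that each summand $u_{\fat{\alpha}}(\fo/\fp)$ is, via $u_{\fat{\alpha}}$, isomorphic to the additive group $\fo/\fp = k_F$, which is already an $\bbF_p$-vector space; hence $p \cdot u_{\fat{\alpha}}(\fo/\fp) = 0$ and these factors contribute unchanged to the Frattini quotient.

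Thus taking the quotient by $p$-th powers only affects the torus piece $\overline{T}_1$, yielding
\[
I_1/\Phi(I_1) \;\cong\; \overline{T}_1/\overline{T}_1^{\,p} \;\oplus\; \bigoplus_{\fat{\alpha}\in \Pi_{\aff}} u_{\fat{\alpha}}(\fo/\fp),
\]
as claimed. There is essentially no obstacle here: the entire content was already carried by the commutator calculations in Lemma \ref{i1ab}, and the corollary is the formal consequence obtained by further killing $p$-th powers. The only tiny point to verify carefully is that the direct sum decomposition of $I_1^{\textnormal{ab}}$ is preserved under passage to the mod-$p$ quotient, which is automatic since each $u_{\fat{\alpha}}(\fo/\fp)$ is already annihilated by $p$.
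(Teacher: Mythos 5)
Your proof is correct and takes exactly the route the paper intends: the paper states the corollary with no further proof, treating it as the immediate consequence of Lemma \ref{i1ab} that you spell out, namely that the Frattini quotient of a pro-$p$ group is the abelianization modulo $p$-th powers, and that each $u_{\fat{\alpha}}(\fo/\fp)\cong k_F$ is already an $\bbF_p$-vector space so only the $\overline{T}_1$ summand is affected.
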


For $\fat{\alpha} = (\alpha,\ell)\in \Pi_\aff$ and $0\leq r \leq f - 1$, we define $\eta_{\fat{\alpha},r}:I_1\longrightarrow C$ to be the homomorphism
$$\eta_{\fat{\alpha},r}\left(t\prod_{\beta\in \Phi^-}u_{\beta}(\varpi x_{\beta})\prod_{\gamma\in \Phi^+}u_{\gamma}(x_\gamma)\right) = \overline{x_{\alpha}}^{p^r},$$
where $t\in T_1, x_\beta,x_\gamma\in \fo$.

\begin{cor}\label{h1basis}
As vector spaces, we have
$$\textnormal{H}^1(I_1,C) = \Hom^{\textnormal{cts}}(I_1,C) = \Hom^{\textnormal{cts}}(\overline{T}_1,C)\oplus\bigoplus_{\fat{\alpha}\in \Pi_\aff} \bigoplus_{r = 0}^{f - 1}C\eta_{\fat{\alpha},r}.$$
\end{cor}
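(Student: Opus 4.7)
The plan is to reduce the computation to a purely $\mathbb{F}_p$-linear algebra problem via the preceding corollary on the Frattini quotient. Since $C$ is the trivial $I_1$-module and $I_1$ is a profinite group, the standard identification gives $\textnormal{H}^1(I_1,C) = \Hom^{\textnormal{cts}}(I_1,C)$. Because $C$ has characteristic $p$ and $I_1$ is pro-$p$, every continuous homomorphism $I_1 \to C$ must be trivial on $[I_1,I_1]$ and on $p$-th powers, hence it factors through the Frattini quotient $I_1/\Phi(I_1)$. So the whole problem becomes the computation of $\Hom_{\mathbb{F}_p}(I_1/\Phi(I_1), C)$.

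Next, I would invoke the corollary to Lemma \ref{i1ab}, which identifies
$$I_1/\Phi(I_1) \;\cong\; \overline{T}_1/\overline{T}_1^{\,p} \;\oplus\; \bigoplus_{\fat{\alpha}\in \Pi_{\aff}} u_{\fat{\alpha}}(\fo/\fp).$$
Taking $\Hom(-,C)$ converts this into a direct sum, yielding a $\Hom^{\textnormal{cts}}(\overline{T}_1,C)$ summand together with one copy of $\Hom_{\mathbb{F}_p}(u_{\fat{\alpha}}(\fo/\fp),C)$ for each simple affine root $\fat{\alpha}$. It then remains to exhibit an explicit $C$-basis of the latter factor.

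The one genuine (but standard) step is identifying $\{\eta_{\fat{\alpha},r}\}_{r=0}^{f-1}$ as a basis for $\Hom_{\mathbb{F}_p}(u_{\fat{\alpha}}(\fo/\fp),C)$. Since $u_{\fat{\alpha}}$ is a group isomorphism from $(\fo/\fp,+) \cong (k_F,+)$ onto $u_{\fat{\alpha}}(\fo/\fp)$, we are reduced to computing $\Hom_{\mathbb{F}_p}(k_F,C)$, which is $f$-dimensional over $C$. The maps $x \mapsto \overline{x}^{p^r}$ for $r=0,\dots,f-1$ are $\mathbb{F}_p$-linear because Frobenius is additive in characteristic $p$, and they are linearly independent over $C$ by Artin's theorem on the independence of distinct field embeddings (applied to the $f$ embeddings $k_F \hookrightarrow C$ obtained by composing the fixed embedding $k_F \hookrightarrow C$ with the powers of the absolute Frobenius). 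A dimension count then shows they form a basis. Pulling these back via $u_{\fat{\alpha}}$ — and extending by zero on all other factors of $I_1/\Phi(I_1)$ — recovers precisely the maps $\eta_{\fat{\alpha},r}$ of the statement, completing the proof.

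No real obstacle is expected: the whole argument is a straightforward synthesis of the Frattini quotient calculation with the elementary description of $\Hom_{\mathbb{F}_p}(k_F,C)$, and the only content is bookkeeping to match the explicit formula defining $\eta_{\fat{\alpha},r}$ with the abstract basis produced by Artin independence.
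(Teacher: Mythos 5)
Your argument is correct and is exactly the derivation the paper intends: Corollary \ref{h1basis} is stated without proof as an immediate consequence of Lemma \ref{i1ab} and the Frattini quotient corollary, namely that continuous homomorphisms $I_1\to C$ factor through $\overline{T}_1/\overline{T}_1^{\,p}\oplus\bigoplus_{\fat{\alpha}\in\Pi_{\aff}}u_{\fat{\alpha}}(\fo/\fp)$, with the Frobenius twists $x\mapsto \overline{x}^{p^r}$ giving the basis of $\Hom_{\bbF_p}(k_F,C)$. Your filling-in of the details (linear independence via independence of the $f$ embeddings, dimension count, and matching with the explicit formula for $\eta_{\fat{\alpha},r}$) is accurate and complete.
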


We now consider deeper congruence subgroups of $I_1$.  For $m\geq 1$, let $I_m$ denote the open compact subgroup 
$$I_m := \left\langle \lambda(1 + \fp^m), u_\alpha(\fp^{m - 1}), u_\beta(\fp^m): \lambda\in X_*(\bT), \alpha\in \Phi^+, \beta\in \Phi^-\right\rangle.$$
By \cite[Section I.2]{ss:reptheorysheaves}, $I_m$ is normal in $I_1$.

\begin{lemma}\label{frattinilemma}
Let $e := [F:\qp]/f$ denote the absolute ramification index of $F$.  If $m > e$, then the Frattini quotient of $I_m$ is equal to 
$$I_m/I_{m + e} = T_{m}/T_{m + e}\oplus\bigoplus_{\alpha\in \Phi^+}u_{\alpha}(\fp^{m - 1}/\fp^{m + e - 1})\oplus\bigoplus_{\beta\in \Phi^-}u_{\beta}(\fp^m/\fp^{m + e}),$$
where $T_{n} := \langle\lambda(1 + \fp^n):\lambda\in X_*(\bT)\rangle$.  
\end{lemma}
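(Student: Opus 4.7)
My plan is to show that $I_{m+e}$ is precisely the Frattini subgroup $\Phi(I_m) = [I_m,I_m]I_m^p$, and then use the Iwahori decomposition of $I_m$ to identify the direct sum decomposition of the resulting abelian quotient. Since $I_m$ is pro-$p$ and $I_{m+e}$ is normal in $I_m$ (by \cite{ss:reptheorysheaves}), the Frattini quotient is the largest elementary abelian $p$-quotient.

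The inclusion $I_m^p \subseteq I_{m+e}$ and the reverse inclusion $I_{m+e}\subseteq I_m^p\cdot[I_m,I_m]$ are obtained by observing that $p\fo = \fp^e$ implies $u_{\fat{\gamma}}(y)^p = u_{\fat{\gamma}}(py)$ raises the filtration level exactly by $e$, giving $u_\alpha(\fp^{m-1})^p = u_\alpha(\fp^{m+e-1})$ and $u_\beta(\fp^m)^p = u_\beta(\fp^{m+e})$; and that for $y\in\fp^m$ one has $(1+y)^p\equiv 1+py \pmod{\fp^{2m}}$ with $2m>m+e$, whence an iterative lifting argument gives $T_m^p = T_{m+e}$. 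The inclusion $[I_m,I_m]\subseteq I_{m+e}$ reduces, via the generators, to four types of commutators: the torus-torus case is trivial; the torus-root case follows from $[\lambda(1+z),u_\alpha(x)]=u_\alpha\bigl(((1+z)^{\langle\alpha,\lambda\rangle}-1)x\bigr)$, which lives in $u_\alpha(\fp^{2m-1})\subseteq I_{m+e}$ since $m>e$; the root-root case with $\alpha\neq\pm\beta$ is handled by the commutator formula \eqref{comm}, where each term $u_{i\alpha+j\beta}(c\cdot x^iy^j)$ (with $i,j\geq 1$) lies in the required filtration after a short case analysis on the signs of $\alpha$, $\beta$, and $i\alpha+j\beta$, again using $m>e$; finally, the opposite-root case $[u_\alpha(x),u_{-\alpha}(y)]$ is computed by the same $\mathrm{SL}_2$ identity used in part (c) of the proof of Lemma \ref{i1ab}, which expresses it as $u_{-\alpha}(-x^2y\nu^{-1})\,\alpha^\vee(\nu)\,u_\alpha(xy^2\nu^{-1})$ with $\nu = 1 - xy$; tracking filtration levels, each factor lands in $I_{m+e}$ because $\nu \in 1 + \fp^{2m-1}$ and $2m-1\geq m+e$ when $m>e$.

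Combining both inclusions gives $\Phi(I_m) = I_{m+e}$. The Iwahori decomposition (after fixing orderings on $\Phi^+$ and $\Phi^-$) gives a homeomorphism of $I_m$ with the product $T_m \times \prod_{\alpha\in\Phi^+}u_\alpha(\fp^{m-1})\times \prod_{\beta\in\Phi^-}u_\beta(\fp^m)$. Modulo the commutators computed above, this product becomes the claimed direct sum, yielding the formula. The main obstacle is the bookkeeping in Step 2 for the opposite-root commutator, since it produces a nontrivial torus contribution $\alpha^\vee(\nu)$ and one must carefully verify that all three factors of the resulting Iwahori decomposition simultaneously land in the correct congruence layer; the hypothesis $m>e$ (rather than merely $m\geq e$) is exactly what makes this work.
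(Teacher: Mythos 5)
Your proposal is correct and follows essentially the same route as the paper: identify $I_m^p[I_m,I_m]$ with $I_{m+e}$ by checking $p$-th powers of the standard generators and the three types of commutators (torus--root, root--root with $\alpha\neq\pm\beta$, and opposite roots via the $\mathrm{SL}_2$ identity from Lemma \ref{i1ab}(c)), then read off the decomposition of $I_m/I_{m+e}$ from the Iwahori decomposition. The only differences are cosmetic — you obtain $(1+\fp^m)^p=1+\fp^{m+e}$ by a direct successive-approximation argument where the paper cites Serre, and you spell out the final Iwahori-decomposition step a bit more explicitly.
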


\begin{proof}
We show that the subgroup $I_m^p[I_m,I_m]$ is equal to $I_{m + e}$.  First, by \cite[XIV, Proposition 9]{serre:localfields}, and the definition of ramification index, we have $(1 + \fp^m)^p = 1 + \fp^{m + e}$ and $p\fp^n = \fp^{n + e}$ for $n \geq 0$.  
Hence, for $\lambda\in X_*(\bT), \alpha\in \Phi^+$ and $\beta\in \Phi^-$, we have
\begin{equation}\label{frattini}
\begin{aligned}
\lambda(1 + \fp^{m + e})  = \lambda\left((1 + \fp^m)^p\right) = \lambda(1 + \fp^m)^p &\in I_m^p[I_m,I_m],\\
u_{\alpha}(\fp^{m + e - 1})  = u_{\alpha}(p\fp^{m - 1}) = u_\alpha(\fp^{m - 1})^p & \in I_m^p[I_m,I_m],\\
u_{\beta}(\fp^{m + e})  = u_{\beta}(p\fp^m) = u_\beta(\fp^m)^p & \in I_m^p[I_m,I_m].
\end{aligned}
\end{equation}
This gives $I_{m + e}\subset I_m^p[I_m,I_m]$.

We now have the following three points. 
\begin{enumerate}[(a)]
\item Take $\alpha \in \Phi^+, \lambda\in X_*(\bT), x\in 1 + \fp^m,$ and $y\in \fp^{m - 1}$.  We have
$$[\lambda(x), u_{\alpha}(y)] = u_{\alpha}\big((x^{\langle\alpha,\lambda\rangle} - 1)y\big),$$
and the valuation of $(x^{\langle\alpha,\lambda\rangle} - 1)y$ is greater than or equal to $2m - 1 > m + e - 1$.  Hence $[\lambda(x), u_{\alpha}(y)]\in I_{m + e}$.  Similarly, we have $[\lambda(x), u_{\alpha}(y)]\in I_{m + e}$ if $\alpha\in \Phi^-, y\in \fp^m$.

\item  Let $\alpha,\beta\in \Phi^+$ and $x,y\in \fp^{m - 1}$, and suppose $\alpha\neq \beta$.  By equation \eqref{comm}, we have
$$[u_\alpha(x),u_{\beta}(y)] = \prod_{\sub{i,j > 0}{i\alpha + j\beta\in \Phi}}u_{i\alpha + j\beta}(c_{\alpha,\beta;i,j}x^iy^j).$$
Every root $i\alpha + j\beta$ appearing above lies in $\Phi^+$, and every element $c_{\alpha,\beta;i,j}x^iy^j$ has valuation which is greater than or equal to $2m - 2 \geq m + e - 1$, so $[u_\alpha(x),u_{\beta}(y)]\in I_{m + e}$.  Similar considerations show that $[u_\alpha(x),u_{\beta}(y)]\in I_{m + e}$ for $\alpha,\beta\in \Phi, \alpha\neq \pm\beta$ and $x,y$ having the appropriate valuations.

\item Now let $\alpha\in \Phi^+$ and $x\in \fp^m, y\in \fp^{m - 1}$.  As before, we have
$$[u_{-\alpha}(x), u_{\alpha}(y)] = u_{-\alpha}(-x^2y(1 - xy)^{-1})\alpha^\vee(1 - xy)u_{\alpha}(xy^2(1 - xy)^{-1}).$$
Examining the valuations of the right-hand side shows $[u_{-\alpha}(x), u_{\alpha}(y)]\in I_{m + e}$.  
\end{enumerate}

Points (a)--(c) imply that $I_m/I_{m + e}$ is abelian, and the equations \eqref{frattini} imply it is $p$-torsion.  Therefore, we obtain $I_m^p[I_m,I_m]\subset I_{m + e}$.  
\end{proof}

\begin{cor}\label{frattinicor}
Suppose that $m > e$ and $I_m$ is torsion-free.  Then $I_m$ is a uniform pro-$p$ group.  
\end{cor}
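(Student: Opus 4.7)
The plan is to apply the standard characterization (valid for $p$ odd) that a pro-$p$ group is uniform precisely when it is finitely generated, powerful, and torsion-free. Since torsion-freeness is part of the hypothesis, it suffices to verify the other two conditions.

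Finite generation will follow at once from Lemma \ref{frattinilemma}: the quotient $I_m/I_{m+e}$ is an $\bbF_p$-vector space of finite dimension, and the Frattini subgroup of $I_m$ is contained in $I_{m+e}$, so $I_m$ admits a finite minimal set of topological generators.

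The substantive step is to verify powerfulness, i.e., $[I_m, I_m] \subseteq I_m^p$. My plan is to establish the stronger equality $I_m^p = I_{m+e}$. The inclusion $I_m^p \subseteq I_{m+e}$ is immediate from the equality $I_m^p[I_m, I_m] = I_{m+e}$ that was the content of Lemma \ref{frattinilemma}. For the reverse inclusion, I would observe that the identities $(1 + \fp^m)^p = 1 + \fp^{m+e}$ and $p\fp^n = \fp^{n+e}$ appearing in \eqref{frattini} are set-theoretic equalities, not just containments; consequently each topological generator in the defining presentation of $I_{m+e}$ is literally a $p$-th power of a corresponding generator of $I_m$, hence lies in $I_m^p$. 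This gives $I_{m+e} \subseteq I_m^p$, and combining the two inclusions yields $I_m^p = I_{m+e}$. Powerfulness then follows from
$$[I_m, I_m] \subseteq I_m^p[I_m, I_m] = I_{m+e} = I_m^p.$$

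There is no genuine obstacle: the corollary is essentially a repackaging of the preceding lemma, with the only new observation being the upgrading of the three lines of \eqref{frattini} from assertions about the Frattini quotient to set-theoretic identifications, which is inherent in \cite[XIV, Proposition 9]{serre:localfields} and the definition of the absolute ramification index.
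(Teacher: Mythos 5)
Your proof is correct, but it follows a genuinely different route from the paper. The paper's proof is a dimension count: it observes that the minimal number of topological generators of $I_m$, read off from the Frattini quotient computed in Lemma \ref{frattinilemma}, equals $ef\dim(\bG) = [F:\qp]\dim(\bG)$, which is the dimension of $I_m$ as a $p$-adic manifold, and then invokes the criterion of Klopsch--Snopce that a torsion-free $p$-adic analytic pro-$p$ group whose generator number equals its dimension is uniform. You instead verify powerfulness directly, by upgrading the equality $I_m^p[I_m,I_m] = I_{m+e}$ to $I_m^p = I_{m+e}$: since the identities in \eqref{frattini} are set-theoretic, every generator $\lambda(1+\fp^{m+e})$, $u_\alpha(\fp^{m+e-1})$, $u_\beta(\fp^{m+e})$ of $I_{m+e}$ is literally a $p$-th power of an element of $I_m$, so $I_{m+e}\subseteq I_m^p$; then $[I_m,I_m]\subseteq I_{m+e} = I_m^p$ and the classical characterization (finitely generated, powerful, torsion-free, $p$ odd) gives uniformity. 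Your argument is more elementary and self-contained --- it avoids both the Klopsch--Snopce criterion and the computation of $\dim(I_m)$ as a manifold, at the cost of re-examining the proof (not just the statement) of Lemma \ref{frattinilemma} --- and it yields slightly more, namely the explicit identification $I_m^p = I_{m+e}$ of the terms of the lower $p$-series; the paper's route buys a shorter deduction that uses only the statement of the lemma.
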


\begin{proof}
The dimension of $I_m$ as a $p$-adic manifold is $[F:\qp]\dim(\bG)$.  On the other hand, the size of a minimal generating set for $I_m$ is equal to the $\bbF_p$-dimension of the ($\bbF_p$-dual of the) Frattini quotient, which by Lemma \ref{frattinilemma} is equal to
$$\dim_{\bbF_p}\left((1 + \fp^m)/(1 + \fp^{m + e})\right)\dim(\bT) + \dim_{\bbF_p}\left(\fp^{m}/\fp^{m + e}\right)|\Phi| = ef\dim(\bG) = [F:\qp]\dim(\bG).$$
By \cite[Proposition 1.10 and Remark 1.11]{klopschsnopce} this implies $I_m$ is uniform.  
\end{proof}

\bigskip

\section{First cohomology}\label{first}

\textbf{We assume henceforth that $\Phi$ is irreducible.}

We now begin to compute the action of $\cH$ on higher pro-$p$-Iwahori cohomology.  In this section, we compute the Hecke action on $\textnormal{H}^1(I_1,C)$.  By forgetting the $\cH$-module structure, we have an isomorphism of vector spaces $\textnormal{H}^1(I_1,C) = \textnormal{Z}^1(I_1,C) \cong \Hom^{\textnormal{cts}}(I_1,C)$.

\begin{lemma}\label{Tsalphatriv}
Let $\eta\in \textnormal{H}^1(I_1,C) = \Hom^{\textnormal{cts}}(I_1,C)$, and let $\fat{\alpha} = (\alpha,\ell)\in \Pi_\aff$.  If
$$\eta\cdot\T_{\salpha} \neq 0,$$
then $F = \qp$ and $\Phi$ is of type $A_1$.  In this case, $\eta\cdot\T_{\salpha}$ is supported on $u_{\fat{\alpha}}(\fo)$, and 
$$(\eta\cdot\T_{\salpha})(u_{\fat{\alpha}}(y)) = \eta(u_{\fat{\alpha^*}}(-y)),$$
where $\fat{\alpha^*} := (-\alpha,1 - \ell)\in \Pi_\aff$.  
\end{lemma}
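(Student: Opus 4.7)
Since $C$ has trivial $G$-action, $g_x^{-1}$ acts trivially on coefficients and equation \eqref{actoncocyc} becomes, for $i = 1$,
\[ (\eta \cdot \T_{\salpha})(h) \;=\; \sum_{x \in k_F} \eta(\xi_x(h)). \]
Using the cocycle identity \eqref{cocycle} together with the bijectivity of $x \mapsto x(h)$ on $\cX = k_F$, one checks that the right-hand side is again a continuous homomorphism $I_1 \to C$; hence $\eta \cdot \T_{\salpha}$ is determined by its values on the topological generating set $\{t,\, u_{\fat{\beta}}(y)\colon t \in T_1,\, \fat{\beta} \in \Phi^+ \times \{0\} \sqcup \Phi^- \times \{1\},\, y \in \fo\}$ of $I_1$ appearing at the start of Section \ref{calcofxi}. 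Moreover, since $\eta$ factors through the Frattini quotient of $I_1$ (Corollary \ref{h1basis}), any factor of $\xi_x(h)$ lying in $[I_1, I_1]$ contributes zero. The plan is to evaluate each of the four cases of Lemma \ref{conj} using these simplifications.

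For cases (b)--(d), the goal is to show the sum always vanishes, via four observations: (i) a factor $u_\gamma(z)$ with $\gamma \in \Phi^+ \smallsetminus \Pi$, or with $\gamma \in \Phi^- \smallsetminus \{-\alpha_0\}$ and $z \in \fp$, lies in $[I_1, I_1]$ by Lemma \ref{i1ab}; (ii) the factor $\alpha^\vee(\nu^{-1})$ in case (c) lies in $[I_1, I_1]$ since $\nu^{-1} \in 1 + \fp$ and $\alpha^\vee(1 + \fp) \subset [I_1, I_1]$; (iii) any $x$-independent factor (the right-most $u_{s_\alpha(\beta)}(\ldots)$ in (b), the $t^{s_\alpha}$ in (d)) contributes $q \cdot (\mathrm{const}) = 0$ after summation in $C$; and (iv) the remaining $x$-dependent terms have Frattini images of the form $L(\bar\kappa\, x^i \bar y^j)$ for some $\ffp$-linear map $L\colon k_F \to C$ and integer $i \geq 1$. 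Crucially, $i \leq 3$ uniformly across all irreducible root systems (with $i = 3$ attained only in $G_2$), and the hypothesis $p \geq 5$ forces $q - 1 \geq 4 > i$, so $\sum_{x \in k_F} x^i = 0$. Together these kill all contributions from cases (b)--(d), showing that $\eta \cdot \T_{\salpha}$ is supported on $u_{\fat{\alpha}}(\fo)$.

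Case (a) is the heart of the matter: $\xi_x(u_{\fat{\alpha}}(y)) = u_{-\fat{\alpha}}([x + \bar y] - [x] - y)$ rewrites as $u_{\fat{\alpha^*}}(w)$ with $w := ([x + \bar y] - [x] - y)/\varpi \in \fo$. A direct check shows $\fat{\alpha^*} \in \Pi_\aff$ iff $\alpha$ is simultaneously simple and, up to sign, the highest root of $\Phi$, i.e., iff $\Phi$ is of type $A_1$. For $\Phi \not\cong A_1$, Lemma \ref{i1ab} gives $u_{\fat{\alpha^*}}(\fo) \subset [I_1, I_1]$ and the sum vanishes. If $\Phi \cong A_1$, write $\eta \circ u_{\fat{\alpha^*}} = L \circ (\cdot \bmod \fp)$ for an $\ffp$-linear $L\colon k_F \to C$; it remains to evaluate $\sum_{x} L(\bar w_x)$.

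For $F = \qp$, write $y = [\bar y] + p\tilde y$ so that $w_x = C(x, \bar y) - \tilde y$ with $C(x, \bar y) := ([x + \bar y] - [x] - [\bar y])/p \in \zp$. The reindexing identity $\sum_x [x + \bar y] = \sum_x [x]$ yields $\sum_x C(x, \bar y) = -[\bar y]$, giving $\sum_x L(\bar w_x) = L(-\bar y) = \eta(u_{\fat{\alpha^*}}(-y))$ (the last equality because $-y$ and $-[\bar y]$ differ by an element of $\fp$, killed in the Frattini quotient). For $F \neq \qp$, either $e > 1$, in which case $\bar w_x$ is independent of $x$ and the sum is $q \cdot L(\mathrm{const}) = 0$; or $e = 1$ and $f > 1$, in which case $\sum_x \overline{C(x, \bar y)} = \overline{-p^{f-1}[\bar y]} = 0$ in $k_F$, and the total sum vanishes. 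I anticipate the main obstacle is the bookkeeping in case (b), but this becomes mechanical once the uniform bound $i \leq 3$ and the power-sum identity $\sum_{x \in k_F} x^i = 0$ are invoked.
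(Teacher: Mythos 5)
Your proposal is correct and follows essentially the same route as the paper's proof: reduce via the trivial-coefficient form of \eqref{actoncocyc} to evaluating $\sum_{x\in k_F}\eta(\xi_x(h))$ on generators, kill the contributions of Lemma \ref{conj}(b)--(d) using Lemma \ref{i1ab} (Frattini factorization) together with the vanishing of $\sum_{x\in k_F}x^i$ for $1\leq i\leq 3$ and $p\geq 5$, and in case (a) use the reindexing $\sum_x[x+\overline{y}]=\sum_x[x]$ to produce the factor $q$, which survives exactly when $F=\qp$ and lands in $u_{\fat{\alpha^*}}$, visible in the abelianization only for type $A_1$. Your splitting of case (a) into the subcases $e>1$ and $e=1$, $f>1$ is just a more granular version of the paper's single observation that $\textnormal{val}(q)=[F:\qp]$, so there is no substantive difference.
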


\begin{proof}
To determine $\eta\cdot\T_{\salpha}$, it suffices to know its values on $T_1$ and the root subgroups $u_{\fat{\beta}}(\fo)$ for $\fat{\beta}\in\Pi_\aff$.  By equation \eqref{actoncocyc}, we have
$$(\eta\cdot\T_{\salpha})(h) = \sum_{x\in k_F}\eta(\xi_x(h)).$$

If $t\in T_1$, we have
\begin{eqnarray*}
(\eta\cdot\T_{\salpha})(t) & \stackrel{\textnormal{Lem.}~ \ref{conj}\eqref{conjd}}{=} & \sum_{x\in k_F}\eta\left(t^{s_\alpha}u_{-\fat{\alpha}}((1 - \alpha(t)^{-1})[x])\right)\\
 & = & \sum_{x\in k_F}\eta\left(t^{s_\alpha}\right) + \eta\left(u_{-\fat{\alpha}}((1 - \alpha(t)^{-1})[x])\right)\\
 & = & \eta\left(u_{-\fat{\alpha}}\left((1 - \alpha(t)^{-1})\sum_{x\in k_F}[x]\right)\right)\\
 & = & 0.
\end{eqnarray*}

Now let $\fat{\beta} = (\beta,m)\in \Pi_\aff$ and $y\in \fo$.  
\begin{enumerate}[(a)]
\item If $\fat{\beta} = \fat{\alpha}$, then we obtain
\begin{eqnarray*}
(\eta\cdot\T_{\salpha})(u_{\fat{\beta}}(y)) & \stackrel{\textnormal{Lem.}~\ref{conj}\eqref{conja}}{=} & \sum_{x\in k_F}\eta\left(u_{-\fat{\alpha}}([x + \overline{y}] - [x] - y)\right)\\
& = & \eta\left(u_{-\fat{\alpha}}\left(\sum_{x\in k_F}[x + \overline{y}] - [x] - y\right)\right)\\
& = & \eta\left(u_{-\fat{\alpha}}(-qy)\right)\\
& = & \delta_{F,\qp}\eta\left(u_{\fat{\alpha^*}}(-y)\right),
\end{eqnarray*}
where $\fat{\alpha^*} := (-\alpha,1 - \ell)$ is an affine root (which is an element of $\Pi_\aff$ if and only if $\Phi$ is of type $A_1$).  For the last equality, note that $\textnormal{val}(q) = [F:\qp]$.

\item If $\fat{\beta}\neq \fat{\alpha}$ and $\beta\neq -\alpha$, we obtain
\begin{eqnarray*}
(\eta\cdot\T_{\salpha})(u_{\fat{\beta}}(y)) & \stackrel{\textnormal{Lem.}~\ref{conj}\eqref{conjb}}{=} & \sum_{x\in k_F}\eta\bigg(\prod_{\sub{i,j > 0}{i\alpha + j\beta\in \Phi}}u_{s_\alpha(i\alpha + j\beta)}\left(d_{\alpha,i\alpha + j\beta}c_{\alpha,\beta;i,j}\varpi^{-i\ell + jm - j\ell\langle \beta,\alpha^\vee\rangle}[x]^iy^j\right)\\
 & & \qquad \cdot ~u_{s_\alpha(\beta)}\left(d_{\alpha,\beta}\varpi^{m - \ell\langle\beta,\alpha^\vee\rangle}y\right)\bigg)\\
 & = & \sum_{x\in k_F}\sum_{\sub{i,j > 0}{i\alpha + j\beta\in \Phi}}\eta\bigg(u_{s_\alpha(i\alpha + j\beta)}\left(d_{\alpha,i\alpha + j\beta}c_{\alpha,\beta;i,j}\varpi^{-i\ell + jm - j\ell\langle \beta,\alpha^\vee\rangle}[x]^iy^j\right)\bigg)\\
 & = & \sum_{\sub{i,j > 0}{i\alpha + j\beta\in \Phi}}\eta\bigg(u_{s_\alpha(i\alpha + j\beta)}\Big(d_{\alpha,i\alpha + j\beta}c_{\alpha,\beta;i,j}\varpi^{-i\ell + jm - j\ell\langle \beta,\alpha^\vee\rangle}y^j\sum_{x\in k_F}[x]^i\Big)\bigg)\\
  & = & 0.
\end{eqnarray*}
The last equality follows from the fact that $x\longmapsto [x]^i$ is a nontrivial character (note that $1\leq i\leq 3$ and $p\geq 5$).

\item Finally, assume that $\fat{\beta}\neq\fat{\alpha}$ and $\beta = -\alpha$, so that $\fat{\beta} = \fat{\alpha^*} = (-\alpha,1 - \ell)$.  For $x\in k_F$, set $\nu_x := 1 + \varpi[x]y$.  We then have
\begin{eqnarray*}
(\eta\cdot\T_{\salpha})(u_{\fat{\beta}}(y)) & \stackrel{\textnormal{Lem.}~\ref{conj}\eqref{conjc}}{=} & \sum_{x\in k_F}\eta\left(u_{\fat{\alpha}}(-\varpi y\nu_x^{-1})\alpha^\vee(\nu_x^{-1})u_{\fat{\beta}}([x]^2y\nu_x^{-1})\right)\\
& = & \sum_{x\in k_F}\eta\left(u_{\fat{\beta}}([x]^2y)\right)\\
& = & \eta\left(u_{\fat{\beta}}\left(y\sum_{x\in k_F}[x]^2\right)\right)\\
& = & 0,
\end{eqnarray*}
where again the last equality follows from the fact that $x\longmapsto [x]^2$ is a nontrivial character for $p\geq 5$.
\end{enumerate}
\end{proof}

\begin{lemma}\label{TomegaonT}
Let $\eta\in \Hom^{\textnormal{cts}}(\overline{T}_1,C)\subset \textnormal{H}^1(I_1,C)$ and $\omega\in \widetilde{\Omega}$.  Then
$$\eta\cdot \T_{\omega} = \eta.$$
\end{lemma}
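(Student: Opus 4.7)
The plan is: since $\omega\in\widetilde{\Omega}$ normalizes $I_1$, formula \eqref{actoncocyc} collapses to $(\eta\cdot\T_\omega)(h) = \eta(\dot\omega h \dot\omega^{-1})$ for any lift $\dot\omega\in N_G(T)$ of $\omega$. Hence it suffices to show $\eta(\dot\omega h\dot\omega^{-1}) = \eta(h)$ for all $h\in I_1$. Because $\eta\in\Hom^{\textnormal{cts}}(\overline{T}_1,C)$ factors through the composition $I_1\twoheadrightarrow I_1^{\textnormal{ab}}\twoheadrightarrow \overline{T}_1$ from Lemma \ref{i1ab}, we need only verify that conjugation by $\dot\omega$ on $I_1^{\textnormal{ab}}$ preserves the direct sum decomposition of Lemma \ref{i1ab} and acts trivially on the $\overline{T}_1$ summand.

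We first check that the decomposition is preserved. Since $\Omega$ stabilizes $\Pi_\aff$, for each $\fat{\beta}=(\beta,m)\in\Pi_\aff$ there is a unique $\omega\cdot\fat{\beta}\in\Pi_\aff$. A short computation with the definitions of the $u_{\fat{\alpha}}$ and the $W$-action on affine roots shows that $\dot\omega u_{\fat{\beta}}(y)\dot\omega^{-1}\in u_{\omega\cdot\fat{\beta}}(\fo)$ for all $y\in\fo$, so conjugation by $\dot\omega$ permutes the root summands of $I_1^{\textnormal{ab}}$. Since $\eta$ vanishes on every root summand, it remains to verify that conjugation by $\dot\omega$ acts trivially on $\overline{T}_1$.

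For the torus summand we use the identification $T_1\cong X_*(\bT)\otimes_{\bbZ}(1+\fp)$: any $t\in T_1$ can be written as $\prod_i\lambda_i(x_i)$ with $\lambda_i\in X_*(\bT)$ and $x_i\in 1+\fp$. Letting $\bar\omega\in W_0$ denote the image of $\omega$, we have
$$\dot\omega\, t\,\dot\omega^{-1}\cdot t^{-1} = \prod_i(\bar\omega\lambda_i - \lambda_i)(x_i).$$
The identity $w\lambda-\lambda\in\bbZ\Phi^\vee$ for $w\in W_0$, $\lambda\in X_*(\bT)$ (reducing via a reduced expression to $s_\alpha\lambda-\lambda = -\langle\alpha,\lambda\rangle\alpha^\vee$) then shows $\dot\omega t\dot\omega^{-1}\cdot t^{-1}\in\langle\alpha^\vee(1+\fp):\alpha\in\Phi\rangle$, which is precisely the subgroup quotiented out in the definition of $\overline{T}_1$. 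Thus $\dot\omega t\dot\omega^{-1}$ and $t$ have equal images in $\overline{T}_1$, completing the argument. The whole proof is bookkeeping on top of Lemma \ref{i1ab}; the only nontrivial ingredient is the $W_0$-stability of the coroot lattice translation, which is standard, so no serious obstacle is expected.
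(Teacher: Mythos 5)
Your proof is correct and follows essentially the same route as the paper's: reduce via the normalizing property of $\omega$ to checking $\eta(\omega h\omega^{-1}) = \eta(h)$, observe that conjugation permutes the root summands of $I_1^{\textnormal{ab}}$ (on which $\eta$ vanishes), and then show on $T_1$ that $\omega t\omega^{-1}t^{-1}\in\langle\alpha^\vee(1+\fp)\rangle_{\alpha\in\Phi}$. The only difference is presentational — the paper compresses your coroot-lattice computation into the phrase ``writing $\bo$ as a product of simple reflections,'' whereas you unpack it via $T_1\cong X_*(\bT)\otimes_{\bbZ}(1+\fp)$ and the identity $w\lambda - \lambda\in\bbZ\Phi^\vee$, which is the same inductive argument on length.
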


\begin{proof}
Since $\omega$ normalizes $I_1$, the simplified version of equation \eqref{actoncocyc} gives
$$(\eta\cdot\T_{\omega})(h) = \eta(\omega h\omega^{-1})$$
for $h\in I_1$.  If $h\in U_\alpha$ for some $\alpha\in \Phi$, then $\omega h\omega^{-1}\in U_{\bo(\alpha)}$, which implies $\eta\cdot\T_\omega$ is supported in $T_1$.  Taking $h = t\in T_1$ and writing $\bo$ as a product of simple reflections, we see that 
$$\omega t\omega^{-1} = \bo t\bo^{-1} = tt',$$
where $t'\in \langle\alpha^\vee(1 + \fp)\rangle_{\alpha\in\Phi}$.  Therefore 
$$(\eta\cdot\T_{\omega})(t) = \eta(tt') = \eta(t),$$
which gives the claim.  
\end{proof}

\begin{lemma}\label{Tomegaonunip}
Let $\fat{\alpha} = (\alpha,\ell)\in \Pi_\aff$ and $0\leq r \leq f - 1$.  
\begin{enumerate}[(a)]
\item If $t_0\in T_0$, then
$$\eta_{\fat{\alpha},r}\cdot \T_{t_0} = \overline{\alpha}(t_0)^{p^r}\cdot\eta_{\fat{\alpha},r}.$$
\label{Tomegaonunipa}
\item If $\omega\in \widetilde{\Omega}$ and $\omega\widehat{\bo}^{-1} = \lambda(\varpi)$ for some $\lambda\in X_*(\bT)$, then
$$\eta_{\fat{\alpha},r}\cdot \T_{\omega} = d_{\bo,\bo^{-1}(\alpha)}\cdot\eta_{\omega^{-1}(\fat{\alpha}),r}.$$
\label{Tomegaonunipb}
\end{enumerate}
\end{lemma}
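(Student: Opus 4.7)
The plan is to exploit the fact that both $t_0\in T_0$ and $\omega\in \widetilde{\Omega}$ normalize $I_1$ (for $t_0$, because $\gamma(t_0)\in \fo^\times$ for every $\gamma\in \Phi$; for $\omega$, by definition of $\Omega$). The simplified version of formula \eqref{actoncocyc} then reduces both Hecke actions to conjugation: $(\eta\cdot\T_g)(h) = \eta(ghg^{-1})$, using the triviality of the coefficient module. Since $\eta_{\fat{\alpha},r}$ factors through the abelianization of $I_1$ computed in Lemma \ref{i1ab}, I would only need to check each identity on the generators $T_1$ and $u_{\fat{\beta}}(\fo)$ for $\fat{\beta}\in\Pi_\aff$; on $T_1$ both sides trivially vanish, so the content is entirely in the root subgroups.

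For part (a), a direct computation gives $t_0 u_{\fat{\beta}}(y) t_0^{-1} = u_{\fat{\beta}}(\beta(t_0)y)$ for $\fat{\beta}=(\beta,m)\in\Pi_\aff$, so that $\eta_{\fat{\alpha},r}$ picks up the scalar $\overline{\alpha(t_0)}^{p^r}$ precisely on the $\fat{\alpha}$-coordinate. Using that $\alpha(t_0)\in \fo^\times$, this equals $\overline{\alpha}(t_0)^{p^r}$, delivering the asserted eigenvalue.

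For part (b), I would fix the lift $\omega = \lambda(\varpi)\widehat{\bo}$ (any lift works since $T_1 \subset I_1$). Combining $\widehat{\bo}u_\beta(x)\widehat{\bo}^{-1} = u_{\bo(\beta)}(d_{\bo,\beta}x)$ with the torus conjugation $\lambda(\varpi)u_\gamma(x)\lambda(\varpi)^{-1} = u_\gamma(\varpi^{\langle\gamma,\lambda\rangle}x)$ yields
$$\omega u_{\fat{\beta}}(y)\omega^{-1} = u_{\omega\cdot\fat{\beta}}(d_{\bo,\beta}y),$$
where $\omega\cdot\fat{\beta}$ denotes the standard $W$-action on affine roots from Section \ref{notation}, and lands back in $\Pi_\aff$ because $\omega$ lifts an element of $\Omega$. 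Evaluating $\eta_{\fat{\alpha},r}$ then enforces $\omega\cdot\fat{\beta} = \fat{\alpha}$, i.e.\ $\fat{\beta} = \omega^{-1}(\fat{\alpha})$, in which case the underlying root of $\fat{\beta}$ is $\bo^{-1}(\alpha)$ and the scalar $d_{\bo,\beta}$ equals $d_{\bo,\bo^{-1}(\alpha)}$. Since $d_{\bo,\beta}\in \mu_2$ is fixed by the $p^r$-th power, this constant factors cleanly out of $\overline{d_{\bo,\beta}y}^{p^r}$ to produce the right-hand side of the stated identity.

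The calculations are essentially routine once the conjugation formulas are in place. The only mild point requiring care is to track that $\lambda(\varpi)$ shifts the level $m$ of an affine root $(\beta,m)$ to $m + \langle\beta,\lambda\rangle$ while leaving the Chevalley constant $d_{\bo,\beta}$ undistorted, so that the permutation of $\Pi_\aff$ and the $\pm 1$ sign assemble together against $\eta_{\omega^{-1}(\fat{\alpha}),r}$ in precisely the way claimed.
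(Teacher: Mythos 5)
Your proposal is correct and follows essentially the same approach as the paper: both use the simplified form of the Hecke action for elements normalizing $I_1$, reduce to conjugation on the generators of $I_1^{\mathrm{ab}}$, and unwind the factorization $\omega = \lambda(\varpi)\widehat{\bo}$ to track the Chevalley constant $d_{\bo,\bo^{-1}(\alpha)}$ and the permutation of $\Pi_{\aff}$. The only cosmetic difference is that you check the conjugation formula on arbitrary $u_{\fat{\beta}}(\fo)$ and then deduce the support, whereas the paper evaluates directly on $u_{\omega^{-1}(\fat{\alpha})}(\fo)$ after first noting the support.
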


\begin{proof}
Given an arbitrary $\omega\in \widetilde{\Omega}$, the simplified version of equation \eqref{actoncocyc} gives
$$(\eta_{\fat{\alpha},r}\cdot\T_{\omega})(h) = \eta_{\fat{\alpha},r}(\omega h\omega^{-1})$$
for $h\in I_1$.  If $h\in T_1$, then $\omega h \omega^{-1}\in T_1$, which implies $\eta_{\fat{\alpha},r}\cdot\T_{\omega}$ takes the value 0 on $T_1$.  The image of $\omega$ in $W$ acts on $\Pi_\aff$, and therefore $\eta_{\fat{\alpha},r}\cdot\T_{\omega}$ is supported on $u_{\omega^{-1}(\fat{\alpha})}(\fo)$.  
\begin{enumerate}[(a)]
\item Taking $\omega = t_0\in T_0$ and $y\in \fo$, we have
\begin{eqnarray*}
(\eta_{\fat{\alpha},r}\cdot\T_{t_0})(u_{\fat{\alpha}}(y)) & = & \eta_{\fat{\alpha},r}\left(t_0u_{\fat{\alpha}}(y)t_0^{-1}\right)\\
 & = & \eta_{\fat{\alpha},r}\left(u_{\fat{\alpha}}(\alpha(t_0)y)\right)\\
 & = & \overline{\alpha}(t_0)^{p^r}\eta_{\fat{\alpha},r}\left(u_{\fat{\alpha}}(y)\right).
\end{eqnarray*}
\item If $\omega\widehat{\bo}^{-1} = \lambda(\varpi)$ for some $\lambda\in X_*(\bT)$ and $y\in \fo$, then
\begin{eqnarray*}
(\eta_{\fat{\alpha},r}\cdot\T_{\omega})\left(u_{\omega^{-1}(\fat{\alpha})}(y)\right) & = & \eta_{\fat{\alpha},r}\left(\lambda(\varpi)\widehat{\bo}u_{\omega^{-1}(\fat{\alpha})}(y)\widehat{\bo}^{-1}\lambda(\varpi)^{-1}\right)\\
 & = & \eta_{\fat{\alpha},r}\left(u_{\fat{\alpha}}(d_{\bo,\bo^{-1}(\alpha)}y)\right)\\
 & = & d_{\bo,\bo^{-1}(\alpha)}\cdot\eta_{\omega^{-1}(\fat{\alpha}),r}\left(u_{\omega^{-1}(\fat{\alpha})}(y)\right)
\end{eqnarray*}
(for the last step, recall that $d_{\bo,\bo^{-1}(\alpha)} = \pm 1$).
\end{enumerate}
\end{proof}

\begin{thm}
\label{mainthmtrivchar}
Set $a := \textnormal{dim}_{C}(\Hom^{\textnormal{cts}}(\overline{T}_1,C))$.
\begin{enumerate}[(a)]
\item  If $F = \qp$ and $\Phi$ is of type $A_1$, then we have an isomorphism of $\cH$-modules
$$\textnormal{H}^1(I_1,C) \cong \chi_{\textnormal{triv}}^{\oplus a}\oplus \ind_{\cH_T}^{\cH}(\overline{\alpha}),$$
where $\Pi = \{\alpha\}$, and where $\ind_{\cH_T}^{\cH}(\overline{\alpha})$ is the simple $\cH$-module given by $\ind_B^G(\overline{\alpha})^{I_1}$ (cf. \cite[Proposition 4.4]{olliviervigneras}).  
\item If $F \neq \qp$ or $\Phi$ is not of type $A_1$, then we have an isomorphism of $\cH$-modules
$$\textnormal{H}^1(I_1,C) \cong \chi_{\textnormal{triv}}^{\oplus a}\oplus \bigoplus_{\fat{\alpha}\in \Pi_{\aff}/\Omega}\bigoplus_{r = 0}^{f - 1}\fm_{\fat{\alpha},r},$$
where $\fm_{\fat{\alpha},r}$ is a simple supersingular $\cH$-module, of dimension equal to the size of the orbit of $\fat{\alpha}$ under $\Omega$.  Furthermore, all elements $\T_{\widehat{s_{\fat{\beta}}}}$ for $\fat{\beta}\in \Pi_\aff$ act by $0$ on $\fm_{\fat{\alpha},r}$, while the elements $\T_{t_0}$ for $t_0\in T_0$ act semisimply with eigenvalues $\overline{\alpha}(\bo t_0 \bo^{-1})^{p^r}$, as $\omega$ ranges over $\widetilde{\Omega}$ modulo the stabilizer of $\fat{\alpha}$.  
\end{enumerate}
\end{thm}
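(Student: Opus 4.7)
The plan is to use Corollary \ref{h1basis} to split $\textnormal{H}^1(I_1,C)$ as a $C$-vector space into the ``torus part'' $V_1:=\Hom^{\textnormal{cts}}(\overline{T}_1,C)$ and the ``root part'' $V_2:=\bigoplus_{\fat{\alpha}\in\Pi_\aff}\bigoplus_{r=0}^{f-1}C\eta_{\fat{\alpha},r}$, to verify that both summands are $\cH$-stable, and then to identify the module structure on each. Since $\cH$ is generated by $\{\T_{\salpha}\}_{\fat{\alpha}\in\Pi_\aff}$ together with $\{\T_\omega\}_{\omega\in\widetilde{\Omega}}$, the combined content of Lemmas \ref{Tsalphatriv}, \ref{TomegaonT} and \ref{Tomegaonunip} pins down the action on both pieces completely.

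For $V_1$, Lemma \ref{TomegaonT} gives $\eta\cdot\T_\omega=\eta$ for $\omega\in\widetilde{\Omega}$, and Lemma \ref{Tsalphatriv} forces every $\T_{\salpha}$ to act by $0$ (either automatically, or in case (a) because an element of $V_1$ is trivial on every root subgroup, so the formula $(\eta\cdot\T_{\salpha})(u_{\fat{\alpha}}(y))=\eta(u_{\fat{\alpha^*}}(-y))$ vanishes). These two values agree with $\chi_{\textnormal{triv}}(\T_\omega)=1$ and $\chi_{\textnormal{triv}}(\T_{\salpha})=q=0$ in characteristic $p$, yielding $V_1\cong\chi_{\textnormal{triv}}^{\oplus a}$. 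Stability of $V_2$ is then automatic from Lemmas \ref{Tsalphatriv} and \ref{Tomegaonunip}, since each generator of $\cH$ sends any $\eta_{\fat{\alpha},r}$ to a scalar multiple of another $\eta_{\fat{\beta},r}$ (never mixing in the torus part).

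In case (a), $V_2$ is the two-dimensional space spanned by $\eta_{\alpha,0}$ and $\eta_{\fat{\alpha^*},0}$, and I would compute the matrices of the generators $\T_{\widehat{s_\alpha}}$, $\T_{\widehat{s_{\fat{\alpha^*}}}}$, $\T_{t_0}$, and $\T_\omega$ ($\omega\in\widetilde{\Omega}$) directly from Lemmas \ref{Tsalphatriv} and \ref{Tomegaonunip} and match the result with the standard presentation of $\ind_{\cH_T}^\cH(\overline{\alpha})$ recorded in \cite[Proposition 4.4]{olliviervigneras}; this is a finite bookkeeping check. In case (b), Lemma \ref{Tsalphatriv} forces every $\T_{\salpha}$ to act by $0$ on $V_2$, so the $\cH$-structure reduces to the signed permutation of basis vectors under $\widetilde{\Omega}$ (Lemma \ref{Tomegaonunip}(b)) together with $T_0$-scaling by $\overline{\beta}(\bullet)^{p^r}$ (Lemma \ref{Tomegaonunip}(a)). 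Decomposing according to $\Omega$-orbits on $\Pi_\aff$ and the Frobenius index $r$ produces the blocks $\fm_{\fat{\alpha},r}$ with the advertised dimension and eigenvalues; simplicity follows because $\fm_{\fat{\alpha},r}|_{\cH_\aff}$ is a direct sum of pairwise distinct characters (the $T_0$-weights $\overline{\beta}^{p^r}$ for $\fat{\beta}$ in the orbit are distinct) which $\widetilde{\Omega}$ permutes transitively.

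The main obstacle in case (b) will be verifying that each $\cH_\aff$-character $\chi_{\fat{\beta},r}\colon\T_{t_0}\mapsto\overline{\beta}(t_0)^{p^r}$, $\T_{\salpha}\mapsto 0$ is genuinely supersingular in the sense of Subsection \ref{ssingsect}. The sign twist is excluded at once by $\T_{\salpha}\mapsto 0\neq-1$; for the trivial twist I must rule out the possibility that $\xi:=\overline{\beta}^{-p^r}$, viewed as a character of $\bT(k_F)$, satisfies $\xi\circ\gamma^\vee([x])=1$ for every $\gamma\in\Pi$ and $x\in k_F^\times$. My plan is to reduce this condition to the divisibility $(q-1)\mid\langle\beta,\gamma^\vee\rangle$ for each simple $\gamma$, and then to use $p\geq 5$ (so that $q-1\geq 4$ strictly exceeds any Cartan integer appearing in an irreducible root system) together with the nondegeneracy of the pairing $\Phi\times\Phi^\vee\to\bbZ$ (so that no nonzero root is orthogonal to every simple coroot) to derive a contradiction.
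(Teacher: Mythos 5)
Your overall route is the same as the paper's: decompose $\textnormal{H}^1(I_1,C)$ via Corollary \ref{h1basis}, use Lemmas \ref{Tsalphatriv}, \ref{TomegaonT} and \ref{Tomegaonunip} to see that the torus part gives $\chi_{\textnormal{triv}}^{\oplus a}$ and that the span of the $\eta_{\fat{\alpha},r}$ is $\cH$-stable, identify the $A_1/\qp$ case with $\ind_{\cH_T}^{\cH}(\overline{\alpha})$ by an explicit matching of the generator matrices (the paper outsources exactly this bookkeeping to the proof of Proposition 5.5 of \cite{koziol:h1ps}, so deferring it is comparable), and in case (b) deduce supersingularity by showing the relevant $\cH_\aff$-characters are not twists of the trivial or sign character. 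Your divisibility argument for supersingularity is correct and, if anything, more complete than the paper's one-line check $\eta_{\fat{\alpha},r}\cdot\T_{\alpha^\vee([x])}=x^{2p^r}\cdot\eta_{\fat{\alpha},r}$: from $(q-1)\mid p^r\langle\beta,\gamma^\vee\rangle$ you get $\langle\beta,\gamma^\vee\rangle=0$ for all $\gamma\in\Pi$ since $q-1\geq 4$ exceeds any Cartan integer, and this is impossible because $\beta^\vee$ is a rational combination of simple coroots while $\langle\beta,\beta^\vee\rangle=2$.

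The one genuine soft spot is the parenthetical assertion in case (b) that the $T_0$-weights $\overline{\beta}^{p^r}$, for $\fat{\beta}$ in the $\Omega$-orbit of $\fat{\alpha}$, are pairwise distinct; this is precisely where the paper is careful. The paper's simplicity criterion is distinctness of the characters $\overline{\beta}$ for $\beta\in\Pi\sqcup\{-\alpha_0\}$, and it flags that this can fail when $\Phi$ is of type $A_1$ and $q=5$ (for instance $\bG=\bS\bL_2$ over a ramified extension of $\bbQ_5$, where $\overline{\alpha}=\overline{\alpha}^{-1}$), handling that case by a separate direct calculation. Your within-orbit version of the claim is in fact true, but it requires an argument you do not give: in type $A_1$, a coincidence $\overline{\alpha}=\overline{-\alpha}$ forces $2\alpha\in(q-1)X^*(\bT)$, hence (as $q=5$) $\langle\alpha,X_*(\bT)\rangle\subseteq 2\bbZ$, which makes $\Omega$ act trivially on $\Pi_\aff$, so the orbits are singletons and nothing needs checking; in rank at least $2$, if $\overline{\beta_1}=\overline{\beta_2}$ for distinct $\beta_1,\beta_2\in\Pi\sqcup\{-\alpha_0\}$ then $\beta_1-\beta_2\in(q-1)X^*(\bT)$, and pairing with $\beta_1^\vee$ and $\beta_2^\vee$ forces $\langle\beta_1,\beta_2^\vee\rangle=\langle\beta_2,\beta_1^\vee\rangle=-2$, i.e. $\beta_2=-\beta_1$, which is impossible outside type $A_1$. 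Supply this (or the paper's explicit case split for $A_1$, $q=5$) and your proof is complete; as written, the simplicity step rests on an unproved claim at exactly the place where a degenerate case could hide.
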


\begin{proof}
This follows from Corollary \ref{h1basis} and the results of this section.  More precisely, Lemmas \ref{Tsalphatriv} and \ref{TomegaonT} show that the homomorphisms coming from $\overline{T}_1$ all give the trivial $\cH$-character.  When $F = \qp$ and $\Phi$ is of type $A_1$, Lemmas \ref{Tsalphatriv} and \ref{Tomegaonunip} give the action of $\cH$ on $\textnormal{span}\{\eta_{(\alpha,0),0},~ \eta_{(-\alpha,1),0}\}$ and the proof of Proposition 5.5 of \cite{koziol:h1ps} shows that 
$$\textnormal{span}\{\eta_{(\alpha,0),0},~ \eta_{(-\alpha,1),0}\} \cong \ind_{\cH_T}^{\cH}(\overline{\alpha})$$ 
as $\cH$-modules.  On the other hand, if $F\neq \qp$ or $\Phi$ is not of type $A_1$, then by Lemmas \ref{Tsalphatriv} and \ref{Tomegaonunip}\eqref{Tomegaonunipa} the $C$-vector space spanned by $\eta_{\fat{\alpha},r}$, for $\fat{\alpha} = (\alpha,\ell)\in \Pi_\aff$ and $0\leq r \leq f - 1$, is stable by $\cH_\aff$.  Moreover, it cannot be equal to a twist of the trivial or sign character (since all $\T_{\widehat{s_{\fat{\beta}}}}$ act by $0$ on $\eta_{\fat{\alpha},r}$, and $\eta_{\fat{\alpha},r}\cdot\T_{\alpha^\vee([x])} = x^{2p^r}\cdot\eta_{\fat{\alpha},r}\neq \eta_{\fat{\alpha},r}$ for $x\in k_F^\times$).  Therefore the $\cH$-module generated by $\eta_{\fat{\alpha},r}$ is supersingular, and spanned by $\{\eta_{\fat{\beta},r}\}$ where $\fat{\beta}$ is in the orbit of $\fat{\alpha}$ under $\Omega$.  If the characters $\overline{\beta}$ for $\beta\in \Pi\sqcup \{-\alpha_0\}$ are all distinct, then the $\cH$-module generated by $\eta_{\fat{\alpha},r}$ is simple.  This happens if $\Phi$ is not of type $A_1$, or if $q \geq 7$.  The remaining case ($\Phi$ of type $A_1$ and $q = 5$) is easily dealt with by direct calculation.  
\end{proof}

\begin{thm}\label{arbrootsys}
Suppose $p\nmid |\pi_1(\bG/\bZ)|$, assume the root system $\Phi$ is arbitrary, and write $\Phi = \Phi^{(1)}\sqcup \ldots \sqcup \Phi^{(d)}$ in terms of irreducible components.  Assume further that either (1) $F\neq \qp$; or (2) $\Phi$ does not contain an $A_1$ component.  Then $\textnormal{H}^d(I_1,C)$ contains a supersingular $\cH$-module, and $\textnormal{H}^n(I_1,C)$ contains no supersingular subquotients for $0\leq n < d$.  
\end{thm}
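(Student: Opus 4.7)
The plan is to reduce to the irreducible case (Theorem \ref{mainthmtrivchar}) via a K\"unneth argument. The hypothesis $p \nmid |\pi_1(\bG/\bZ)|$ ensures that the natural isogeny
$$\bZ \times \bG_{\textnormal{sc}}^{(1)} \times \cdots \times \bG_{\textnormal{sc}}^{(d)} \longrightarrow \bG,$$
(with $\bG_{\textnormal{sc}}^{(i)}$ the simply connected simple group of root system $\Phi^{(i)}$) has kernel of order prime to $p$. Taking $F$-points and restricting to pro-$p$ parts then yields a direct product decomposition
$$I_1 \cong J_0 \times I_1^{(1)} \times \cdots \times I_1^{(d)},$$
where $J_0$ is the pro-$p$ part of the maximal compact subgroup of $Z$ and $I_1^{(i)}$ is the pro-$p$-Iwahori of $\bG_{\textnormal{sc}}^{(i)}(F)$; correspondingly, $T_0$ splits as a product indexed by $\{0,1,\ldots,d\}$.

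The K\"unneth formula for continuous cohomology of pro-$p$ groups then gives
$$\textnormal{H}^n(I_1, C) \cong \bigoplus_{n_0 + n_1 + \cdots + n_d = n} \textnormal{H}^{n_0}(J_0, C) \otimes_C \bigotimes_{i=1}^d \textnormal{H}^{n_i}(I_1^{(i)}, C).$$
Using the functorial description \eqref{heckefunctorial}, this decomposition is compatible with the Hecke action: each $\cH_\aff^{(i)}$ acts on the $i$-th tensor factor (since the representatives $\salpha u_{\fat{\alpha}}([x])$ for $\fat{\alpha}\in \Pi_\aff^{(i)}$ lie in the $i$-th factor of the product), while the operators $\T_{t_0}$ act diagonally through the decomposition of $T_0$.

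For the vanishing statement with $0 \leq n < d$, every K\"unneth summand has some $n_i = 0$ by pigeonhole. The $i$-th factor is then $\textnormal{H}^0(I_1^{(i)}, C) = C$ on which all $\T_{\widehat{s_{\fat{\beta}}}}$ (for $\fat{\beta}\in \Pi_\aff^{(i)}$) act by $q \equiv 0$, so these operators act by $0$ on the entire summand; restricting any $\cH_\aff$-subquotient to $\cH_\aff^{(i)}$ forces its composition factors to be twists of the trivial character, which rules out supersingularity. For the existence statement, consider the summand with $n_0 = 0$ and $n_i = 1$ for every $i$: Theorem \ref{mainthmtrivchar}(b) applies to each component (its hypothesis holds since either $F\neq\qp$ or no $\Phi^{(i)}$ is of type $A_1$), so each $\textnormal{H}^1(I_1^{(i)}, C)$ contains a simple supersingular $\cH^{(i)}$-module $\fm_{\fat{\alpha}^{(i)}, r_i}$. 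The tensor product $\bigotimes_i \fm_{\fat{\alpha}^{(i)}, r_i}$ then embeds into $\textnormal{H}^d(I_1,C)$ and contains a supersingular character on restriction to $\cH_\aff$, so the $\cH$-module it generates admits a supersingular simple subquotient.

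The main obstacle is establishing the product decomposition of $I_1$ and verifying the compatibility of the Hecke action with K\"unneth. The coprime-to-$p$ hypothesis on $\pi_1(\bG/\bZ)$ is precisely what makes the relevant isogenies invisible to the pro-$p$ structure, and some care is required to track how $T_0$ distributes across the factors so that the diagonal action of $\T_{t_0}$ is well-defined.
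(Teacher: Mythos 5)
Your proposal takes a genuinely different route from the paper. The paper proceeds in three steps: (a) $\bG$ semisimple simply connected, where $\bG$ and $I_1$ are literal direct products and the K\"unneth formula applies; (b) $\bG^{\textnormal{der}}$ simply connected, where the hypothesis $p\nmid|\pi_1(\bG/\bZ)|$ gives $I_1 \cong Z_1\times I_1^{\textnormal{der}}$ and one again uses K\"unneth together with the injection $\cH^{\textnormal{der}}\hookrightarrow\cH$ from \cite{koziol:glnsln}; and (c) arbitrary $\bG$, reduced to case (b) via a $z$-extension $1\to\tZ\to\tG\to\bG\to1$ and the Hochschild--Serre spectral sequence for $1\to\tZ_1\to\tI_1\to I_1\to1$, since there is no obvious splitting of this extension. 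By contrast, you propose to split the problem in one step via the central isogeny $\bZ\times\bG_{\textnormal{sc}}\to\bG$, asserting that because its kernel has order prime to $p$ it induces a direct product decomposition $I_1\cong Z_1\times\prod_i I_1^{(i)}$, thereby avoiding the spectral sequence entirely. This is a cleaner idea, and the decomposition is in fact correct: the kernel is \'etale of order prime to $p$, so it meets the pro-$p$ group $I_1^{\bZ\times\bG_{\textnormal{sc}}}$ trivially (injectivity), and surjectivity onto $I_1^\bG$ follows from the Iwahori decomposition together with the fact that the root subgroups map isomorphically under a central isogeny and the map on $T_1\cong X_*(\bT)\otimes_{\bbZ}(1+\fp)$ is an isomorphism because the cokernel of $X_*(\bZ)\oplus\bbZ\Phi^\vee\hookrightarrow X_*(\bT)$ has order prime to $p$.

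That said, two points in your sketch are left at the level of assertion and deserve to be flagged. First, you state that ``taking $F$-points and restricting to pro-$p$ parts yields a direct product decomposition'' as if it were immediate; the argument just sketched is not difficult, but it is a real step, since a prime-to-$p$ isogeny of reductive groups is generally neither injective nor surjective on $F$-points, and the claim is specific to the pro-$p$-Iwahori level. Second, your remark that ``$T_0$ splits as a product'' and that ``$\T_{t_0}$ act diagonally through the decomposition of $T_0$'' is not literally correct: the map $Z_0\times\prod_i T_0^{(i)}\to T_0^\bG$ can fail to be surjective on $k_F$-points (already for $\bS\bL_2\to\bP\bG\bL_2$, where the induced map on $\ffq^\times$ is squaring). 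What is true, and what the argument needs, is that conjugation by any $t_0\in T_0^\bG$ preserves the direct product decomposition of $I_1$ (it normalizes each factor), so the induced map on cohomology is still a tensor product of maps, even though $t_0$ itself need not lift. You do acknowledge at the end that ``some care is required to track how $T_0$ distributes across the factors,'' which is honest; but since the supersingularity criterion of Subsection \ref{ssingsect} is phrased in terms of $\bT(k_F)$-characters, this is precisely the place where the paper does careful work (via the intermediate step through $\bG^{\textnormal{der}}$ and the spectral sequence) and where your proposal currently stops short of a proof. With those two steps supplied, your argument would give a somewhat more direct proof than the one in the paper.
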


\begin{proof}
We proceed in three steps.

\begin{enumerate}[(a)]
\item  Suppose first that $\bG$ is semisimple and simply connected.  Then $\bG$ factors as a direct product $\bG = \bG^{(1)}\times \cdots \times \bG^{(d)}$, where $\bG^{(i)}$ corresponds to the root system $\Phi^{(i)}$, and consequently we have a factorization 
$$I_1 = I_1^{(1)}\times \cdots \times I_1^{(d)}.$$
Moreover, the algebra $\cH$ is equal to $\cH_\aff$, and decomposes as a direct product $\cH = \cH^{(1)}\otimes_C \cdots \otimes_C\cH^{(d)}$, where we may identify $\cH^{(i)}$ as the subalgebra of functions with support in $G^{(i)}I_1$.   In particular, the subalgebra generated by $\cH^{(i)}$ and $\{\T_{t_0}\}_{t_0\in T_0}$ is an irreducible component of $\cH$ (as defined in Subsection \ref{ssectdefs}).

The tracing through the arguments in \cite[Section 2.4, Exercise 7]{nsw:coh} shows that the isomorphism in the K\"unneth formula is given by the cross product:
\begin{eqnarray}
\bigoplus_{i_1 + \ldots + i_d = n} \textnormal{H}^{i_1}(I_1^{(1)},C)\otimes_C\cdots \otimes_C \textnormal{H}^{i_d}(I_1^{(d)},C) & \stackrel{\sim}{\longrightarrow} & \textnormal{H}^{n}(I_1,C) \label{kunneth}\\
{\left[f^{(1)}\right]}\otimes \cdots \otimes \left[f^{(d)}\right] & \longmapsto & \left[f^{(1)} \times  \cdots \times f^{(d)}\right] \notag
\end{eqnarray}
where, for $[f^{(1)}]\otimes \cdots \otimes [f^{(d)}] \in \textnormal{H}^{i_1}(I_1^{(1)},C)\otimes_C\cdots \otimes_C \textnormal{H}^{i_d}(I_1^{(d)},C)$, the cocycle representing the cross product is defined by
\begin{flushleft}
$\left(f^{(1)} \times f^{(2)}\times \cdots \times f^{(d)}\right)\left((g_1^{(1)},\ldots, g_1^{(d)}),~(g_2^{(1)},\ldots, g_2^{(d)}),~\ldots,~ (g_n^{(1)},\ldots, g_n^{(d)})\right)$ 
\end{flushleft}
\begin{flushright}
$= f^{(1)}\left(g_1^{(1)},\ldots, g_{i_1}^{(1)}\right)\cdot f^{(2)}\left(g_{i_1 + 1}^{(2)},\ldots, g_{i_1 + i_2}^{(2)}\right)\cdots f^{(d)}\left(g_{i_1 + \cdots + i_{d - 1} + 1}^{(d)},\ldots, g_{i_1 + \cdots + i_{d}}^{(d)}\right)$,
\end{flushright}
where $g_{j}^{(i)}\in I_1^{(i)}$.  Hence, using the decomposition $\cH = \cH^{(1)}\otimes_C\cdots \otimes_C\cH^{(d)}$, we see that the isomorphism in equation \eqref{kunneth} is actually $\cH$-equivariant.

Taking $i_1 = i_2 = \ldots = i_d = 1$ gives an injection
$$\textnormal{H}^{1}(I_1^{(1)},C)\otimes_C\cdots \otimes_C \textnormal{H}^{1}(I_1^{(d)},C)  \longhookrightarrow  \textnormal{H}^{d}(I_1,C).$$
By Theorem \ref{mainthmtrivchar} and the assumptions of the present theorem, the left-hand side contains a tensor product of supersingular characters of each $\cH^{(i)}$.  Therefore, the restriction to each irreducible component of $\cH$ of this tensor product is a supersingular module, and thus $\textnormal{H}^d(I_1,C)$ contains a supersingular $\cH$-module.

If $0\leq n < d$, then any $d$-tuple $(i_1,\ldots, i_d)$ which satisfies $i_1 + \ldots + i_d = n$ must have some $i_j = 0$.  The restriction to $\cH^{(j)}$ of the module
$$\textnormal{H}^{i_1}(I_1^{(1)},C)\otimes_C\cdots \otimes_C\textnormal{H}^0(I_1^{(j)},C)\otimes_C\cdots \otimes_C \textnormal{H}^{i_d}(I_1^{(d)},C)\cong \textnormal{H}^{i_1}(I_1^{(1)},C)\otimes_C\cdots \otimes_C \chi_{\textnormal{triv}}^{(j)} \otimes_C\cdots \otimes_C \textnormal{H}^{i_d}(I_1^{(d)},C)$$
is a direct sum of trivial characters of $\cH^{(j)}$, and therefore the tensor product cannot contain a supersingular $\cH$-module.  Using equation \eqref{kunneth}, we get that $\textnormal{H}^n(I_1,C)$ contains no supersingular subquotients if $0 \leq n < d$.

\item  Suppose next that $\bG$ has simply connected derived subgroup $\bG^{\textnormal{der}}$.  Then $\bT^{\textnormal{der}} := \bG^{\textnormal{der}}\cap \bT$ is a maximal torus of $\bG^{\textnormal{der}}$ and $I_1^{\textnormal{der}} := G^{\textnormal{der}}\cap I_1$ is a pro-$p$-Iwahori subgroup of $G^{\textnormal{der}}$.  We let $T_1^{\textnormal{der}}$ (resp. $Z_1$) denote the maximal pro-$p$ subgroup of $T^{\textnormal{der}}$ (resp. of the connected center $Z$).

Let $n'$ denote the order of $\pi_1(\bG/\bZ) = X_*(\bT)/(X_*(\bT^{\textnormal{der}})\oplus X_*(\bZ))$.  Then given any $\lambda\in X_*(\bT)$, we may write $n'\lambda = \lambda^{\textnormal{der}} + \lambda^Z$ with $\lambda^{\textnormal{der}}\in X_*(\bT^{\textnormal{der}})$ and $\lambda^Z\in X_*(\bZ)$.  Since $p\nmid n'$, the map $x\longmapsto x^{n'}$ gives an automorphism of $1 + \fp$, which implies
$$\lambda(x) = \lambda(x^{1/n'})^{n'} = (n'\lambda)(x^{1/n'}) = (\lambda^{\textnormal{der}} + \lambda^Z)(x^{1/n'}) = \lambda^{\textnormal{der}}(x^{1/n'})\lambda^Z(x^{1/n'})$$
for $x\in 1 + \fp$.  Since $T_1$ is generated by elements of the form $\lambda(x)$ for $\lambda\in X_*(\bT)$ and $x\in 1 + \fp$, the above equation shows
$$T_1 = T_1^{\textnormal{der}}Z_1,$$
and the Iwahori decomposition implies that the multiplication map
$$Z_1\times I_1^{\textnormal{der}}\longrightarrow I_1$$
is surjective.  Its kernel is equal to the finite $p$-group $Z_1\cap I_1^{\textnormal{der}}$, which is a subgroup of the finite group $\bZ\cap \bG^{\textnormal{der}}$ (we identify algebraic groups with their groups of rational points over an algebraic closure of $F$).  However, $\bG^{\textnormal{der}}$ is the simply connected cover of the semisimple group $\bG/\bZ$, and therefore the kernel $\bZ\cap \bG^{\textnormal{der}}$ of the surjection $\bG^{\textnormal{der}} \longtwoheadrightarrow \bG/\bZ$ is equal to $\pi_1(\bG/\bZ)$.  Since $Z_1\cap I_1^{\textnormal{der}}$ is a $p$-group contained in $\pi_1(\bG/\bZ)$, and the order of the latter group is prime to $p$, we conclude that $Z_1\cap I_1^{\textnormal{der}} = 1$, and therefore
$$Z_1\times I_1^{\textnormal{der}} \cong I_1.$$

Now let $\cH^\textnormal{der}$ denote the pro-$p$-Iwahori--Hecke algebra of $G^{\textnormal{der}}$ with respect to $I_1^{\textnormal{der}}$.  We have an injection of algebras $\cH^{\textnormal{der}}\longhookrightarrow \cH$ (\cite[Proposition 4.3]{koziol:glnsln}), sending the characteristic function of $I_1^{\textnormal{der}}gI_1^{\textnormal{der}}$ to the characteristic function of $I_1gI_1$ for $g\in G^{\textnormal{der}}$.  Using this injection, we have a chain of subalgebras $\cH^{\textnormal{der}}\subset \cH_\aff \subset \cH$.  Furthermore, using the decomposition $I_1\cong Z_1\times I_1^{\textnormal{der}}$ and the K\"unneth formula above, we have an isomorphism
$$\bigoplus_{i + j = n}\textnormal{H}^i(Z_1,C)\otimes_C\textnormal{H}^j(I_1^{\textnormal{der}},C)\stackrel{\sim}{\longrightarrow} \textnormal{H}^n(I_1,C)$$
which is $\cH^{\textnormal{der}}$-equivariant (where $\cH^{\textnormal{der}}$ acts on the left-hand side on $\textnormal{H}^j(I_1^{\textnormal{der}},C)$, and on the right-hand side via the injection $\cH^{\textnormal{der}}\longhookrightarrow \cH$).  It is straightforward to check that a simple $\cH$-module is supersingular if and only if its restriction to $\cH^{\textnormal{der}}$ contains a supersingular $\cH^{\textnormal{der}}$-character (cf. Subsection \ref{ssingsect}).  
Therefore, by part (a) we see that $\textnormal{H}^d(I_1,C)$ contains a supersingular $\cH$-module, while $\textnormal{H}^n(I_1,C)$ contains no supersingular subquotients if $0 \leq n < d$.

\item Finally, let $\bG$ be arbitrary, and let $\widetilde{\bG}$ denote a $z$-extension of $\bG$.  This means that $\widetilde{\bG}$ is a split connected reductive group with simply connected derived subgroup, which fits into a short exact sequence
$$1 \longrightarrow \widetilde{\bZ} \longrightarrow \widetilde{\bG} \longrightarrow \bG \longrightarrow 1$$
where $\widetilde{\bZ}$ is a central torus.  This gives the short exact sequence
$$1 \longrightarrow \tZ_1 \longrightarrow \tI_1 \longrightarrow I_1 \longrightarrow 1,$$
where $\tZ_1$ is the maximal pro-$p$ subgroup of $\tZ$, and $\tI_1$ is the pro-$p$-Iwahori subgroup defined by the same chamber as $I_1$ (note that we may identify the semisimple Bruhat--Tits buildings of $\widetilde{\bG}$ and $\bG$; see \cite[Section 3.5(ii)]{henniartvigneras:satake}).  We let $\widetilde{\cH}$ denote the pro-$p$-Iwahori--Hecke algebra of $\tG$ with respect to $\tI_1$, and let $\widetilde{\T}_g$ denote the characteristic function of $\tI_1g\tI_1$.  Taking the quotient of $\widetilde{\cH}$ by the two-sided ideal generated by $(\widetilde{\T}_{z} - 1)_{z\in \tZ}$ gives an isomorphism
$$\widetilde{\cH}/(\widetilde{\T}_{z} - 1)_{z\in \tZ} \cong \cH.$$

The short exact sequence above gives rise to a Hochschild--Serre spectral sequence
$$E_2^{i,j} = \textnormal{H}^i(I_1,\textnormal{H}^j(\tZ_1,C))\Longrightarrow \textnormal{H}^{i + j}(\tI_1,C),$$
which is $\cH$-equivariant (the algebra $\widetilde{\cH}$ acts on the right-hand side and the elements $\widetilde{\T}_z, z\in \tZ$ act trivially, so the action descends to $\cH$).  Since $\tZ_1$ is central, the group $I_1$ acts trivially on $\textnormal{H}^j(Z_1,C)$, and therefore the spectral sequence above becomes
$$E_2^{i,j} = \textnormal{H}^i(I_1,C)\otimes_C\textnormal{H}^j(\tZ_1,C)\Longrightarrow \textnormal{H}^{i + j}(\tI_1,C).$$
Since $\textnormal{H}^0(I_1,C)$ is the trivial character of $\cH$, it is nonsupersingular.  Suppose that $\textnormal{H}^i(I_1,C)$ has no supersingular subquotients for $0\leq i \leq n - 1 < d - 1$.  Then the same is true of the $\cH$-module $E_2^{i,j}$ for all $j$.  The $\cH$-module $E_\infty^{n,0}$ is the quotient of $E_2^{n,0}\cong \textnormal{H}^{n}(I_1,C)$ by various nonsupersingular subquotients of $E_2^{i,j}$ with $0\leq i \leq n - 1$.  On the other hand, $E_\infty^{n,0}$ is a submodule of $\textnormal{H}^{n}(\tI_1,C)$, which has no supersingular subquotients by part (b).  We conclude that $\textnormal{H}^{n}(I_1,C)$ has no supersingular subquotients for all $0\leq n < d$.  Using this fact, the filtration on $\textnormal{H}^{d}(\tI_1,C)$ coming from the spectral sequence, and that $\textnormal{H}^{d}(\tI_1,C)$ contains a supersingular module, one obtains that $\textnormal{H}^{d}(I_1,C)$ contains a supersingular $\cH$-module.  
\end{enumerate}
\end{proof}

\begin{rmk}
If the assumptions of the above theorem are not met, then it may happen that $\textnormal{H}^n(I_1,C)$ has no supersingular subquotients for all $n$.  Consider the following example.  Let $\bG = \bS\bL_2$ over $\qp$, and $I_1$ the ``upper triangular mod-$p$'' pro-$p$-Iwahori subgroup of $G = \textnormal{SL}_2(\qp)$.  Since $p\geq 5$, the group $I_1$ is torsion-free, and it is therefore a Poincar\'e group of dimension 3 (\cite[Section 4.5, Remark 3]{serre:galoiscoh}).  Write $\Pi = \{\alpha\}$; we then have
\begin{eqnarray*}
\textnormal{H}^0(I_1,C) & \cong & \chi_{\textnormal{triv}},\\
\textnormal{H}^1(I_1,C) & \cong & \ind_{\cH_T}^{\cH}(\overline{\alpha}),\\
\textnormal{H}^2(I_1,C) & \cong & \ind_{\cH_T}^{\cH}(\overline{\alpha}),\\
\textnormal{H}^3(I_1,C) & \cong & \chi_{\textnormal{triv}},\\
\textnormal{H}^n(I_1,C) & \cong & 0~\textnormal{for}~n \geq 4.
\end{eqnarray*}
The first isomorphism follows from the remark in Subsection \ref{heckecoh}, the second from Theorem \ref{mainthmtrivchar}, and the last two from Theorem \ref{htop} below and the definition of Poincar\'e groups (and the universal coefficient theorem).

We sketch a proof of the isomorphism $\textnormal{H}^2(I_1,C)  \cong  \ind_{\cH_T}^{\cH}(\overline{\alpha})$.  By the universal coefficient theorem, it suffices to prove the claim with $C$ replaced by $\bbF_p$.  Let $\alpha^*$ denote the affine root $(-\alpha,1)$.  Then $\widehat{s_\alpha} = \sm{0}{1}{-1}{0}$ and $\widehat{s_{\alpha^*}} = \sm{0}{-p^{-1}}{p}{0}$, and we set $K_1 := I_1\cap \widehat{s_\alpha}I_1 \widehat{s_\alpha}^{-1}$ and $K_1^* := I_1 \cap \widehat{s_{\alpha^*}}I_1 \widehat{s_{\alpha^*}}^{-1}$.  We then have
$$K_1 = \begin{pmatrix}1 + p\bbZ_p & p\bbZ_p \\ p\bbZ_p & 1 + p\bbZ_p\end{pmatrix}\cap \textnormal{SL}_2(\qp)~\textnormal{and}~K_1^* = \begin{pmatrix}1 + p\bbZ_p & \bbZ_p \\ p^2\bbZ_p & 1 + p\bbZ_p\end{pmatrix}\cap \textnormal{SL}_2(\qp),$$
and both $K_1$ and $K_1^*$ are uniform pro-$p$ groups.  Consequently, \cite[Theorem 5.1.5]{symondsweigel} implies that the cohomology algebra of $K_1$ (resp. $K_1^*$) is an exterior algebra on $\textnormal{H}^1(K_1,\bbF_p)$ (resp. $\textnormal{H}^1(K_1^*,\bbF_p)$).  We have $\textnormal{H}^1(K_1,\bbF_p) = \Hom^{\textnormal{cts}}(K_1,\bbF_p) = \bbF_p\eta_{\textnormal{u}} \oplus \bbF_p\eta_{\textnormal{d}} \oplus \bbF_p\eta_{\textnormal{l}}$, where for $g = \sm{1 + a}{b}{c}{1 + d}, a,b,c,d\in p\bbZ_p$, we define
$$\eta_{\textnormal{u}}(g) = \overline{p^{-1}b},\qquad \eta_{\textnormal{d}}(g) = \overline{p^{-1}a},\qquad \eta_{\textnormal{l}}(g) = \overline{p^{-1}c}$$
(we have a similar description of $\textnormal{H}^1(K_1^*,\bbF_p)$, with basis $\{\eta_{\textnormal{u}}^*,~ \eta_{\textnormal{d}}^*,~ \eta_{\textnormal{l}}^*\}$).  Now set $\kappa_1 := \textnormal{cor}_{K_1}^{I_1}(\eta_{\textnormal{u}}\smile \eta_{\textnormal{d}}), \kappa_2 :=  \textnormal{cor}_{K_1^*}^{I_1}(\eta_{\textnormal{l}}^*\smile \eta_{\textnormal{d}}^*)\in \textnormal{H}^2(I_1,\bbF_p)$.  Note that these elements are nonzero: by the projection formula, we have $\kappa_1\smile \eta_{(-\alpha,1),0} = \textnormal{cor}_{K_1}^{I_1}(\eta_{\textnormal{u}}\smile \eta_{\textnormal{d}}\smile\eta_{\textnormal{l}}) \neq 0$, since $\eta_{\textnormal{u}}\smile \eta_{\textnormal{d}}\smile\eta_{\textnormal{l}}$ is a basis for $\textnormal{H}^3(K_1,\bbF_p)$ and the corestriction gives an isomorphism $\textnormal{cor}_{K_1}^{I_1}:\textnormal{H}^3(K_1,\bbF_p) \stackrel{\sim}{\longrightarrow}\textnormal{H}^3(I_1,\bbF_p)$ (and similarly for $K_1^*$; cf. \cite[Proof of Proposition 30]{serre:galoiscoh}).  Moreover, since $I_1, K_1$ and $K_1^*$ are normalized by $T_0$, the conjugation action commutes with corestriction and cup product, and we see that $T_0$ acts on $\kappa_1$ (resp. $\kappa_2$) by $t_{0,*}^{-1}\kappa_1 = \overline{\alpha}(t_0)\cdot\kappa_1$ (resp. $t_{0,*}^{-1}\kappa_2 = \overline{\alpha}(t_0)^{-1}\cdot\kappa_2$).

It suffices to compute the actions of $\T_{\widehat{s_\alpha}}$ and $\T_{\widehat{s_{\alpha^*}}}$.  We have
\begin{eqnarray*}
\kappa_1\cdot\T_{\widehat{s_\alpha}} & = & \textnormal{cor}_{K_1}^{I_1}\circ \widehat{s_\alpha}^{-1}_*\circ\textnormal{res}_{K_1}^{I_1}\circ\textnormal{cor}_{K_1}^{I_1}(\eta_{\textnormal{u}}\smile\eta_{\textnormal{d}})\\
 & = & \textnormal{cor}_{K_1}^{I_1}\circ \widehat{s_\alpha}^{-1}_*\circ\left(\sum_{x\in \bbF_p}u_\alpha([x])_*\right)(\eta_{\textnormal{u}}\smile\eta_{\textnormal{d}})\\
 & = & 0,
 \end{eqnarray*}
where the last equality follows from the fact that $u_\alpha([x])_*(\eta_{\textnormal{u}}\smile\eta_{\textnormal{d}}) = \eta_{\textnormal{u}}\smile\eta_{\textnormal{d}} - x\cdot\eta_{\textnormal{u}}\smile\eta_{\textnormal{l}} - x^2\cdot \eta_{\textnormal{d}}\smile\eta_{\textnormal{l}}$ and $p\geq 5$.  Computing on the level of cocycles gives $\textnormal{res}_{K_1^*}^{I_1}\circ\textnormal{cor}_{K_1}^{I_1}(\eta_{\textnormal{u}}\smile\eta_{\textnormal{d}}) = \eta_{\textnormal{u}}^*\smile\eta_{\textnormal{d}}^*$, and therefore
\begin{eqnarray*}
\kappa_1\cdot\T_{\widehat{s_{\alpha^*}}} & = & \textnormal{cor}_{K_1^*}^{I_1}\circ \widehat{s_{\alpha^*}}^{-1}_*\circ\textnormal{res}_{K_1^*}^{I_1}\circ\textnormal{cor}_{K_1}^{I_1}(\eta_{\textnormal{u}}\smile\eta_{\textnormal{d}})\\
 & = & \textnormal{cor}_{K_1^*}^{I_1}\circ \widehat{s_{\alpha^*}}^{-1}_*(\eta_{\textnormal{u}}^*\smile\eta_{\textnormal{d}}^*)\\
 & = & \textnormal{cor}_{K_1^*}^{I_1}\left((-\eta_{\textnormal{l}}^*)\smile(-\eta_{\textnormal{d}}^*)\right)\\
 & = & \kappa_2.
\end{eqnarray*}
Likewise, we have $\kappa_2\cdot \T_{\widehat{s_\alpha}} = \kappa_1$ and $\kappa_2\cdot \T_{\widehat{s_{\alpha^*}}} = 0$.  This shows that $\kappa_1$ and $\kappa_2$ are linearly independent, and thus form a basis for $\textnormal{H}^2(I_1,\bbF_p)$ (recall that by the definition of Poincar\'e groups, we have $\dim_{\bbF_p}(\textnormal{H}^2(I_1,\bbF_p)) = \dim_{\bbF_p}(\textnormal{H}^1(I_1,\bbF_p)) = 2$).  Comparing the above calculations with the action of $\cH$ on $\textnormal{H}^1(I_1,C)$ gives the claim.

Let us remark that there are several ways of proving the isomorphism $\textnormal{H}^2(I_1,C)  \cong  \ind_{\cH_T}^{\cH}(\overline{\alpha})$: (1) Using the equivalence of categories between $\textnormal{SL}_2(\qp)$-representations and $\cH$-modules, we could employ same techniques as in \cite[Proposition 10.1]{paskunas:montrealfunctor}; (2)  The Bockstein map $\beta: \textnormal{H}^1(I_1,\bbF_p) \longrightarrow \textnormal{H}^2(I_1,\bbF_p)$ is an injective map between vector spaces of the same dimension, which is furthermore equivariant for the operators $\T_w$; (3) By applying Theorem \ref{poincare}, we have $\textnormal{H}^2(I_1,C) \cong \textnormal{H}^1(I_1,C)^* \cong   \ind_{\cH_T}^{\cH}(\overline{\alpha})^* \cong  \ind_{\cH_T}^{\cH}(\overline{\alpha})$, where the last isomorphism follows from \cite[Theorem 4.9]{abe:inv}.  
\end{rmk}

\begin{rmk}
For a slightly less trivial example of the above phenomenon, let $\bG^{(1)}$ denote any split connected reductive group over $\qp$, and $I_1^{(1)}$ a pro-$p$-Iwahori subgroup of $G^{(1)} = \bG^{(1)}(\qp)$.  If we consider the group $\bS\bL_2\times \bG^{(1)}$, then by the K\"unneth formula we have 
$$\bigoplus_{i + j = n}\textnormal{H}^i(I_1,C)\otimes_C\textnormal{H}^j(I_1^{(1)},C)\cong \textnormal{H}^n(I_1\times I_1^{(1)},C),$$
and the isomorphism is equivariant for the pro-$p$-Iwahori--Hecke algebra of $\textnormal{SL}_2(\qp)\times G^{(1)}$ with respect to $I_1\times I_1^{(1)}$.  Since the first tensor factor on the left-hand side is never supersingular by the remark above, we see that $\textnormal{H}^n(I_1\times I_1^{(1)},C)$ contains no supersingular subquotients for all $n$.  
\end{rmk}

\bigskip

\section{Top cohomology and duality}\label{top}

We continue to assume $\Phi$ is irreducible, and assume in this section that $I_1$ is torsion-free.  By \cite[Section 4.5]{serre:galoiscoh}, this assumption guarantees that the cohomological dimension of $I_1$ is finite, equal to $[F:\qp]\dim(\bG)$, the dimension of $I_1$ as a $p$-adic manifold.

\begin{thm}\label{htop}
Suppose $I_1$ is torsion-free, and let $n := [F:\qp]\dim(\bG)$ denote the cohomological dimension of $I_1$.  We then have an isomorphism of $\cH$-modules
$$\textnormal{H}^n(I_1,C)\cong \chi_{\textnormal{triv}}.$$
\end{thm}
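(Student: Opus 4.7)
The plan is to show that $\textnormal{H}^n(I_1, C)$ is one-dimensional over $C$, so that the $\cH$-action factors through a character $\chi$, and then to verify on algebra generators that $\chi$ matches $\chi_{\textnormal{triv}}$. Since $I_1$ is torsion-free and $p$-adic analytic of dimension $n$, it is a Poincar\'e pro-$p$-group of dimension $n$ by \cite[Section 4.5]{serre:galoiscoh}, giving $\dim_C\textnormal{H}^n(I_1, C) = 1$. By Subsection \ref{ssectdefs}, it will then be enough to compute $\chi$ on the generators $\{\T_{\salpha}\}_{\fat{\alpha}\in\Pi_\aff}$ and $\{\T_\omega\}_{\omega\in\widetilde{\Omega}}$.

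For $\fat{\alpha}\in\Pi_\aff$, the functorial description \eqref{heckefunctorial} presents $\T_{\salpha}$ as the composite $\textnormal{cor}\circ\salpha^{-1}_*\circ\textnormal{res}$ where $\textnormal{res}$ is restriction from $I_1$ to $I_1\cap\salpha I_1 \salpha^{-1}$. The latter is again a Poincar\'e pro-$p$-group of dimension $n$, of index $q$ in $I_1$, as one sees from the coset decomposition $I_1\salpha I_1 = \bigsqcup_{x\in k_F}I_1\salpha u_{\fat{\alpha}}([x])$ recalled at the start of Section \ref{calcofxi}. I would then invoke the standard Poincar\'e-duality fact that, between Poincar\'e pro-$p$-groups of equal dimension, restriction on top cohomology equals multiplication by the index (this being dual to the elementary statement that corestriction on $\textnormal{H}^0$ is multiplication by the index). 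Since $q = p^f$ vanishes in $C$, the restriction map is zero and hence $\chi(\T_{\salpha}) = 0 = \chi_{\textnormal{triv}}(\T_{\salpha})$.

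For $\omega\in\widetilde{\Omega}\subset N_G(I_1)$, the element $\omega$ normalizes $I_1$, so $\T_\omega$ acts on $\textnormal{H}^n(I_1, C)$ as the conjugation $\omega^{-1}_*$, giving a character $\chi_\Omega\colon\widetilde{\Omega}\to C^\times$. In the Lazard theory of torsion-free compact $p$-adic analytic groups, this ``orientation'' character is the reduction mod $p$ of $\det\circ\textnormal{Ad}\colon N_G(I_1)\to F^\times$, where $\textnormal{Ad}$ denotes the adjoint action on $\textnormal{Lie}(\bG)$. Because $\bG$ is reductive, the character $\det\circ\textnormal{Ad}$ is trivial on all of $\bG$: it restricts trivially to the connected center $\bZ$ (since $\textnormal{Ad}|_\bZ$ is the identity) and trivially to $\bG^{\textnormal{der}}$ (which admits no nontrivial characters), and $\bZ\cdot\bG^{\textnormal{der}} = \bG$. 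Thus $\omega^{-1}_*$ is the identity on $\textnormal{H}^n(I_1, C)$, yielding $\chi(\T_\omega) = 1 = \chi_{\textnormal{triv}}(\T_\omega)$.

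The main obstacle will be the careful invocation of the two Poincar\'e-duality inputs used above -- restriction on top cohomology being multiplication by the index, and the outer-automorphism action being computed by $\det\circ\textnormal{Ad}$ on the Lie algebra -- both of which are standard consequences of Lazard's duality theory for torsion-free compact $p$-adic analytic groups but warrant a precise reference (and possibly a short direct argument via uniform subgroups $I_m$, where $\textnormal{H}^\ast$ is the exterior algebra on $\textnormal{H}^1$, cf.~Corollary \ref{frattinicor} and \cite[Theorem 5.1.5]{symondsweigel}) in the actual write-up.
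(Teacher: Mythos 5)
Your argument is correct, and the second half takes a genuinely different route from the paper. For the operators $\T_{\salpha}$ (and more generally $\T_w$ with $\boldl(w)>0$, which the paper treats directly), the two arguments are essentially the same: the index $[I_1 : I_1 \cap wI_1w^{-1}]$ is a positive power of $p$, so restriction on top cohomology vanishes. (The paper cites \cite[Section 4.5, Exercise 5]{serre:galoiscoh} directly; your ``$\textnormal{res}$ is multiplication by the index'' formulation is equivalent once one knows $\textnormal{cor}$ is an isomorphism.)

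The treatment of $\T_\omega$ is where you diverge. The paper reduces (as you also suggest) to the uniform subgroup $I_m$ and $\textnormal{H}^n(I_m,\bbF_p)\cong\bigwedge^n\textnormal{H}^1(I_m,\bbF_p)$, then computes the determinant of conjugation by $\omega$ on $I_m/I_{m+e}$ \emph{explicitly}: it decomposes the Frattini quotient along $\bo$-orbits of $\Phi$ and $T_m/T_{m+e}$, arrives at $(-1)^{(N+\boldl(\bo))ef}$ with $N$ the number of $(\pm1)$-stable $\bo$-orbits, and then checks by a case-by-case inspection of tables that $N+\boldl(\bo)$ is even. You instead appeal to the Lazard-theoretic identification of the orientation character with the mod-$p$ reduction of $\det\circ\textnormal{Ad}$, and then observe that $\det\circ\textnormal{Ad}$ is the trivial algebraic character of a reductive group (trivial on $\bZ$ because $\textnormal{Ad}$ is, trivial on $\bG^{\textnormal{der}}$ because semisimple groups have no nontrivial characters, and $\bG=\bZ\cdot\bG^{\textnormal{der}}$). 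This is cleaner, more conceptual, and completely avoids the parity check and table lookup. The trade-off is that you need a precise form of the Lazard/Symonds--Weigel input: that under $\log\colon I_m\to\mathfrak{i}_m$ the conjugation action corresponds to $\textnormal{Ad}(\omega)$ on a $\bbZ_p$-lattice, whence the action on $\textnormal{H}^n(I_m,\bbF_p)$ is the mod-$p$ reduction of $\det_{\bbZ_p}(\textnormal{Ad}(\omega)\vert_{\mathfrak{i}_m})=\det_{\bbQ_p}(\textnormal{Ad}(\omega))=N_{F/\bbQ_p}(\det_F(\textnormal{Ad}(\omega)))$. Note this last point: what reduces mod $p$ to lie in $\bbF_p^\times$ is the $\bbQ_p$-determinant, i.e.\ $N_{F/\bbQ_p}\circ\det_F\circ\textnormal{Ad}$, not $\det_F\circ\textnormal{Ad}$ valued in $F^\times$ as written; this is harmless for your conclusion since you show the $F$-determinant is already $1$, but the write-up should be corrected. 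You already flag that the Lazard input ``warrants a precise reference,'' and with that supplied (or a short direct argument via $I_m$, exactly as the paper's reduction step), the proof is complete.
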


\begin{proof}
By the universal coefficient theorem, it suffices to prove $\textnormal{H}^n(I_1,\bbF_p)$ is isomorphic to the trivial character of $\cH$ over $\bbF_p$.  Since $I_1$ is a Poincar\'e group of dimension $n$, $\textnormal{H}^n(I_1,\bbF_p)$ is a one-dimensional $\bbF_p$-vector space.

Suppose first that $w\in \tW$ with $\boldl(w) > 0$.  Since $[I_1:I_1\cap wI_1 w^{-1}] = q^{\boldl(w)}$, the subgroup $I_1\cap wI_1 w^{-1}$ is a proper subgroup of $I_1$.  Therefore, by \cite[Section 4.5, Exercise 5]{serre:galoiscoh}, the restriction map $\textnormal{res}^{I_1}_{I_1\cap wI_1 w^{-1}}: \textnormal{H}^n(I_1,\bbF_p) \longrightarrow \textnormal{H}^n(I_1\cap wI_1 w^{-1},\bbF_p)$ is the zero map.  By equation \eqref{heckefunctorial}, the element $\T_w$ acts by 0 on $\textnormal{H}^n(I_1,\bbF_p)$.

We next compute the action of $\T_\omega, \omega\in \widetilde{\Omega}$ on $\textnormal{H}^n(I_1,\bbF_p)$, which, by equation \eqref{heckefunctorial}, is given by $\omega^{-1}_*:\textnormal{H}^n(I_1,\bbF_p)\longrightarrow \textnormal{H}^n(I_1,\bbF_p)$.  We use the following reduction.  Let $e$ denote the absolute ramification index of $F$, and let $m > e$.  Since $I_m$ is an open subgroup of $I_1$, the corestriction map $\textnormal{cor}_{I_m}^{I_1}: \textnormal{H}^n(I_m,\bbF_p) \longrightarrow \textnormal{H}^n(I_1,\bbF_p)$ is an isomorphism (\cite[Proof of Proposition 30]{serre:galoiscoh}).  Since $\omega$ normalizes $I_1$, it also normalizes $I_m$ (\cite[p. 114]{ss:reptheorysheaves}), and the map $\omega_*^{-1}$ commutes with $\textnormal{cor}_{I_m}^{I_1}$.  Therefore, it suffices to compute the action of $\omega^{-1}_*$ on $\textnormal{H}^n(I_m,\bbF_p)$.  By Corollary \ref{frattinicor}, $I_m$ is a uniform pro-$p$ group, and \cite[Theorem 5.1.5]{symondsweigel} implies that we have an isomorphism of algebras
$$\bigoplus_{i\geq 0}\textnormal{H}^i(I_m,\bbF_p) \cong \bigoplus_{i \geq 0}\sideset{}{^i_{\bbF_p}}\bigwedge\textnormal{H}^1(I_m,\bbF_p)$$
(the left-hand side equipped with the cup product, the right-hand side with the wedge product).  In particular, $\textnormal{H}^n(I_m,\bbF_p) \cong \bigwedge^n\textnormal{H}^1(I_m,\bbF_p)$, and in order to compute the action of $\omega^{-1}_*$ on $\textnormal{H}^n(I_m,\bbF_p)$ it suffices to compute the determinant of $\omega_*^{-1}$ on $\textnormal{H}^1(I_m,\bbF_p)$.  Finally, by dualizing, it suffices to compute the determinant of conjugation by $\omega$ on the $\bbF_p$-vector space $I_m/I_{m + e}$ (cf. Lemma \ref{frattinilemma}).

Suppose that $t_0\in T_0$.  If $\alpha\in \Phi^+$, then conjugation by $t_0$ acts on $u_\alpha(\fp^{m - 1}/\fp^{m + e - 1})$ as multiplication by $\alpha(t_0)$ on $\fp^{m - 1}/\fp^{m + e - 1}$.  The $\bbF_p$-vector space $\fp^{m - 1}/\fp^{m + e - 1}$ has a filtration
$$0 \subset \fp^{m + e - 2}/\fp^{m - e - 1} \subset \ldots \subset \fp^{m}/\fp^{m + e - 1} \subset \fp^{m - 1}/\fp^{m + e - 1}$$
which is stable by multiplication by $\alpha(t_0)$.  Therefore, the determinant of multiplication by $\alpha(t_0)$ on $\fp^{m - 1}/\fp^{m + e - 1}$ is equal to 
$$\prod_{i = m - 1}^{m + e - 2}\sideset{}{_{\bbF_p}}\det\left(\alpha(t_0)|_{\fp^{i}/\fp^{i + 1}}\right) = \textnormal{N}_{k_F/\bbF_p}\big(\overline{\alpha(t_0)}\big)^{e}.$$ 
The same argument works for $\beta\in \Phi^-$.  Since conjugation by $t_0$ acts trivially on $T_m/T_{m + e}$, we see that
$$\sideset{}{_{\bbF_p}}\det\left(t_0|_{I_m/I_{m + e}}\right) = \prod_{\alpha\in \Phi}\textnormal{N}_{k_F/\bbF_p}\big(\overline{\alpha(t_0)}\big)^{e} = 1,$$
by symmetry of $\Phi$.

Now let $\omega\in \widetilde{\Omega}$ be arbitrary.  Adjusting $\omega$ by an element of $T_0$, we may assume that $\omega\widehat{\bo}^{-1} = \lambda(\varpi)$ for some $\lambda\in X_*(\bT)$.  We choose an $\bbF_p$-basis $\{x_i\}_{i = 1}^{ef}$ for $\fo/p\fo = \fo/\fp^e$.  Let $\alpha\in \Phi$, and suppose that $\{\alpha,\bo(\alpha),\ldots, \bo^{a - 1}(\alpha)\}$ is the orbit of $\alpha$ under $\bo$.  Since $\omega u_{\beta}(\varpi^{m + c_{\beta}}x_i)\omega^{-1} = u_{\bo(\beta)}(\varpi^{m + c_{\bo(\beta)}}d_{\bo,\beta}x_i)$, where $c_\beta := -\mathbf{1}_{\Phi^+}(\beta)$ and $d_{\bo,\beta} = \pm 1$, it follows that the space 
\begin{equation}\label{htopspan}
\textnormal{span}_{\bbF_p}\left\{u_{\alpha}(\varpi^{m + c_\alpha}x_i),~u_{\bo(\alpha)}(\varpi^{m + c_{\bo(\alpha)}}x_i),~\ldots,~u_{\bo^{a - 1}(\alpha)}(\varpi^{m + c_{\bo^{a - 1}(\alpha)}}x_i)\right\}
\end{equation}
is stable by conjugation by $\omega$.  Further, the determinant of $\omega$ on this space is equal to $(-1)^{a - 1}d_{\bo,\alpha}d_{\bo,\bo(\alpha)}\cdots d_{\bo,\bo^{a - 1}(\alpha)}$.  If the orbit of $-\alpha$ under $\bo$ is disjoint from the orbit of $\alpha$, then the determinant of $\omega$ on the direct sum of the spaces \eqref{htopspan} associated to $\alpha$ and $-\alpha$ is equal to
$$(-1)^{a - 1}d_{\bo,\alpha}d_{\bo,\bo(\alpha)}\cdots d_{\bo,\bo^{a - 1}(\alpha)}\cdot (-1)^{a - 1}d_{\bo,-\alpha}d_{\bo,-\bo(\alpha)}\cdots d_{\bo,-\bo^{a - 1}(\alpha)} = 1,$$
since $d_{w,\beta} = d_{w,-\beta}$.  On the other hand, if $-\alpha$ is contained in the orbit of $\alpha$ under $\bo$, then we have $-\alpha = \bo^{a'}(\alpha)$ for some $a'$, and the orbit of $\alpha$ is equal to 
$$\left\{\alpha, \bo(\alpha),\ldots, \bo^{a' - 1}(\alpha),-\alpha,-\bo(\alpha),\ldots, -\bo^{a' - 1}(\alpha) \right\}.$$
Thus, the determinant of $\omega$ on the subspace \eqref{htopspan} is equal to
$$(-1)^{2a' - 1}d_{\bo,\alpha}\cdots d_{\bo,\bo^{a' - 1}(\alpha)}d_{\bo,-\alpha}\cdots d_{\bo,-\bo^{a' - 1}(\alpha)} = -1$$
(again using that $d_{w,\beta} = d_{w,-\beta}$).  Finally, under the isomorphism $T_m/T_{m + e}\cong X_*(\bT)\otimes_{\bbZ}(1 + \fp^m)/(1 + \fp^{m + e})$, the conjugation action of $\omega$ on $T_m/T_{m + e}$ becomes the conjugation action of $\omega$ on $X_*(\bT)$, and therefore the determinant of $\omega$ on $T_m/T_{m + e}$ is $(-1)^{ef\boldl(\bo)}$.  Combining all of these facts, we have
$$\sideset{}{_{\bbF_p}}\det\left(\omega|_{I_m/I_{m + e}}\right) = (-1)^{(N + \boldl(\bo))ef},$$
where $N$ is the number of orbits of $\bo$ in $\Phi$ which are stable by $\alpha\longmapsto -\alpha$.  A straightforward exercise (using, for example, the tables in \cite[Section 1.8]{iwahorimatsumoto}) shows that $N + \boldl(\bo)$ is always even, and therefore
$$\sideset{}{_{\bbF_p}}\det\left(\omega|_{I_m/I_{m + e}}\right) = 1.$$
\end{proof}

Finally, we examine how the $\cH$-modules $\textnormal{H}^i(I_1, C)$ behave under duality.  We continue to suppose that $\Phi$ is irreducible.  For a right $\cH$-module $\fm$, we let $\fm^*$ denote the dual space $\Hom_{C}(\fm, C)$, which we endow with a right action of $\cH$ via $(f\cdot \T_g)(m) = f(m\cdot\T_{g^{-1}})$.  

\begin{thm}\label{poincare}
Suppose $I_1$ is torsion free, and let $n := [F:\bbQ_p]\dim(\bG)$ denote the cohomological dimension of $I_1$.  For $0 \leq i \leq n$, the cup product induces an isomorphism of $\cH$-modules
$$\textnormal{H}^i(I_1, C)^* \cong \textnormal{H}^{n - i}(I_1, C).$$
\end{thm}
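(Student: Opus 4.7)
The plan is to upgrade the vector-space-level Poincaré duality for the Poincaré pro-$p$ group $I_1$ to an $\cH$-module isomorphism. Since $I_1$ is torsion-free, it is a Poincaré pro-$p$ group of dimension $n$ (\cite[Section 4.5]{serre:galoiscoh}), and combined with Theorem \ref{htop} the cup product induces a perfect $C$-bilinear pairing
$$\smile: \textnormal{H}^i(I_1,C) \times \textnormal{H}^{n-i}(I_1,C) \longrightarrow \textnormal{H}^n(I_1,C) \cong C,$$
yielding a $C$-linear isomorphism $\Psi: \textnormal{H}^{n-i}(I_1,C) \stackrel{\sim}{\longrightarrow} \textnormal{H}^i(I_1,C)^*$. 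All that remains is to show $\Psi$ is $\cH$-equivariant.

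By the definition of the dual $\cH$-module structure, $\cH$-equivariance of $\Psi$ amounts to the identity
$$\alpha \smile (\beta \cdot \T_g) = (\alpha \cdot \T_{g^{-1}}) \smile \beta \quad \text{in } \textnormal{H}^n(I_1,C)$$
for all $g \in G$ and all classes $\alpha, \beta$. By linearity, I would reduce to the algebra generators of $\cH$, namely $\T_{\salpha}$ for $\fat{\alpha} \in \Pi_\aff$ and $\T_\omega$ for $\omega \in \widetilde{\Omega}$. Setting $H := I_1 \cap g I_1 g^{-1}$ and $H' := I_1 \cap g^{-1}I_1 g$ and using the functorial description \eqref{heckefunctorial}, one expands both sides via the three standard compatibilities of the cup product: with restriction, with conjugation, and with corestriction (the projection formula). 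Both sides then collapse to the same inner expression, and the identity reduces to comparing the two maps
$$\textnormal{cor}^{I_1}_H \quad \text{and} \quad \textnormal{cor}^{I_1}_{H'} \circ c_g^* \qquad \text{on } \textnormal{H}^n(H,C),$$
where $c_g^*: \textnormal{H}^n(H,C) \stackrel{\sim}{\longrightarrow} \textnormal{H}^n(H',C)$ is the conjugation isomorphism induced by $g$.

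For $g = \omega \in \widetilde{\Omega}$ one has $H = H' = I_1$, so the desired equality is $\textnormal{id} = c_\omega^*$ on $\textnormal{H}^n(I_1,C)$, which is exactly the determinant computation carried out in the proof of Theorem \ref{htop}. For $g = \salpha$, both $\textnormal{cor}^{I_1}_H$ and $\textnormal{cor}^{I_1}_{H'} \circ c_g^*$ are isomorphisms of one-dimensional $C$-spaces (since corestriction from an open subgroup to $I_1$ is an isomorphism on top cohomology, cf. \cite[Proof of Proposition 30]{serre:galoiscoh}), so they agree up to a scalar which must be pinned down. I would approach this by descending to a sufficiently deep congruence subgroup $I_m$ with $m > e$: by Corollary \ref{frattinicor}, $I_m$ is uniform, and \cite[Theorem 5.1.5]{symondsweigel} identifies its cohomology with the exterior algebra on the dual of the Frattini quotient $I_m/I_{m+e}$. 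Using that corestriction along $I_m \hookrightarrow I_1$ is an isomorphism in top degree and commutes appropriately with restriction and with $c_g^*$, the identity reduces to a linear-algebra computation on $I_m/I_{m+e}$, very much in the spirit of the sign/orbit analysis appearing at the end of the proof of Theorem \ref{htop}.

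The main obstacle is precisely this orientation comparison: the two subgroups $H$ and $H'$ are conjugate in $G$ but not in $I_1$, so one must measure how conjugation by an element $g$ outside $I_1$ interacts with the chosen orientation class in $\textnormal{H}^n(I_1,C)$. I expect a careful case-by-case sign calculation over the $\bo$-orbits on $\Phi$ (parallel to the argument closing Theorem \ref{htop}) to give the required equality, thereby establishing $\cH$-equivariance of $\Psi$ and completing the proof.
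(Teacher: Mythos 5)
Your overall skeleton matches the paper's proof: the perfect cup-product pairing into $\textnormal{H}^n(I_1,C)\cong\chi_{\textnormal{triv}}$, reduction of $\cH$-equivariance to the adjunction identity on the generators $\T_{\salpha}$ ($\fat{\alpha}\in\Pi_\aff$) and $\T_{\omega}$ ($\omega\in\widetilde{\Omega}$), expansion via the projection formula and the compatibility of cup products with conjugation, and reduction to an orientation statement in top degree; and the $\omega$ case is indeed exactly the determinant computation of Theorem \ref{htop}. The gap is in the case $g=\salpha$, which is where the real content lies. Here $\salpha^2\in T_0$, so $H=H'=J_1:=I_1\cap\salpha I_1\salpha^{-1}$ and what you must show is that conjugation by $\salpha$ acts trivially on $\textnormal{H}^n(J_1,C)$. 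Your plan to pin down the scalar by descending to $I_m$ cannot work as stated: $\salpha$ normalizes neither $I_1$ nor $I_m$ (for instance, for $\fat{\alpha}=(\alpha,0)$ with $\alpha\in\Pi$ it sends $u_{\alpha}(\fp^{m-1})$ into $u_{-\alpha}(\fp^{m-1})\not\subset I_m$), so there is no conjugation action on $I_m/I_{m+e}$ whose determinant you could take, and corestriction from $I_m$ does not intertwine $c_{\salpha}^*$ with any endomorphism of $\textnormal{H}^n(I_m,C)$. One needs an $\salpha$-stable uniform open subgroup of $J_1$; the paper takes $J_m:=I_m\cap\salpha I_m\salpha^{-1}$ (stable because $\salpha^2$ normalizes $I_m$), proves the analogue of Lemma \ref{frattinilemma} for it to identify its Frattini quotient $J_m/J_{m+e}$ and conclude uniformity for $m>e$, and only then computes a determinant.

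Moreover, the decisive sign computation is not the one you point to. The parity argument over $\bo$-orbits closing Theorem \ref{htop} is specific to $\widetilde{\Omega}$; for $\salpha$ the relevant analysis on $J_m/J_{m+e}$ pairs $\beta$ with $s_\alpha(\beta)$ and $-\beta$ with $-s_\alpha(\beta)$ (these blocks contribute $+1$, using $d_{\alpha,\beta}=d_{\alpha,-\beta}$), while the block spanned by the $\pm\alpha$ root directions contributes $(-1)^{ef}$ and the torus block $T_m/T_{m+e}$ contributes $(-1)^{\boldl(s_\alpha)ef}$, for a total of $(-1)^{(\boldl(s_\alpha)+1)ef}=1$. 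This cancellation is the actual content of the theorem: had the scalar come out to $-1$ for some $\fat{\alpha}$, the duality would hold only up to a twist of the $\cH$-action, so leaving it as an expected ``careful case-by-case sign calculation'' (and over the wrong orbit structure) is a genuine omission rather than a routine detail.
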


\begin{proof}
The cup product gives a perfect pairing
$$\textnormal{H}^i(I_1, C) \otimes_C \textnormal{H}^{n - i}(I_1, C) \longrightarrow \textnormal{H}^n(I_1,C) \cong C;$$
thus, it suffices to prove that 
$$\varphi\cdot\T_w \smile \psi = \varphi\smile \psi\cdot\T_{w^{-1}}$$
where $\varphi\in \textnormal{H}^i(I_1, C), \psi\in \textnormal{H}^{n - i}(I_1, C)$, and $w\in \tW$.  Further, by the braid relations, it suffices to assume that $w = \salpha$ for $\fat{\alpha}\in \Pi_\aff$ or $w = \omega\in \widetilde{\Omega}$.

Let $\fat{\alpha} = (\alpha,\ell)\in \Pi_\aff$, and for $m\geq 1$, define $J_m := I_m \cap \salpha I_m\salpha^{-1}$, a normal subgroup of $J_1$.  Since $\salpha^2$ normalizes $I_m$, $\salpha$ normalizes $J_m$.  By an argument exactly analogous to the proof of Lemma \ref{frattinilemma}, if $m > e$ then the Frattini quotient of $J_m$ is equal to
$$J_m/J_{m + e} = T_{m}/T_{m + e}\oplus  u_{\alpha}(\fp^{\mathbf{1}_{\Phi^-}(\alpha) + m}/\fp^{\mathbf{1}_{\Phi^-}(\alpha) + m + e})\oplus\bigoplus_{\beta\in \Phi \smallsetminus\{\alpha\}}u_{\beta}(\fp^{\mathbf{1}_{\Phi^-}(\beta) + m - 1}/\fp^{\mathbf{1}_{\Phi^-}(\beta) + m + e - 1}).$$
In particular, if $m > e$, then $J_m$ is a uniform pro-$p$ group.

We claim that the conjugation action $\salpha_*$ on $\textnormal{H}^n(J_1,C)$ is trivial.  As in the proof of Theorem \ref{htop}, it suffices to show that the determinant of conjugation by $\salpha$ on the $\bbF_p$-vector space $J_m/J_{m + e}$ is 1 for some $m > e$.  Let $\{x_i\}_{i = 1}^{ef}$ denote an $\bbF_p$-basis for $\fo/p\fo = \fo/\fp^e$.  If $\beta\in \Phi\smallsetminus\{\pm \alpha\}$, then we have $\salpha u_\beta(\varpi^{m + c_{\beta}}x_i)\salpha^{-1} = u_{s_\alpha(\beta)}(d_{\alpha,\beta}\varpi^{m + c_{s_\alpha(\beta)}}x_i)$, where $c_{\beta} := -\mathbf{1}_{\Phi^+}(\beta)$.  Therefore, the space
\begin{equation}\label{spansalpha}
\textnormal{span}_{\bbF_p}\left\{u_{\beta}(\varpi^{m + c_\beta}x_i),~u_{s_{\alpha}(\beta)}(\varpi^{m + c_{s_\alpha(\beta)}}x_i)\right\}
\end{equation}
is stable by conjugation by $\salpha$.  The determinant of conjugation on this space is equal to $-d_{\alpha,\beta}d_{\alpha,s_{\alpha}(\beta)}$ if $s_{\alpha}(\beta)\neq \beta$, and $d_{\alpha,\beta}$ if $s_{\alpha}(\beta) = \beta$.  This implies that the determinant on the direct sum of the spaces \eqref{spansalpha} associated to $\beta$ and $-\beta$ is equal to 
$$(-d_{\alpha,\beta}d_{\alpha,s_{\alpha}(\beta)})(-d_{\alpha,-\beta}d_{\alpha,-s_{\alpha}(\beta)}) = 1\qquad \textnormal{or}\qquad d_{\alpha,\beta}d_{\alpha,-\beta} = 1,$$
according to whether $s_{\alpha}(\beta) \neq \beta$ or $s_{\alpha}(\beta) = \beta$ (recall that $d_{\alpha,\beta} = d_{\alpha,-\beta}$).  On the other hand, we have $\salpha u_{\alpha}(\varpi^{m + c_{\alpha}'}x_i)\salpha^{-1} = u_{-\alpha}(d_{\alpha,\alpha}\varpi^{m + c_{-\alpha}'}x_i)$, where $c_\alpha' = \mathbf{1}_{\Phi^-}(\alpha)$ and $c_{-\alpha}' = \mathbf{1}_{\Phi^-}(-\alpha) - 1$.  Thus, the determinant of conjugation by $\salpha$ on the space
$$\textnormal{span}_{\bbF_p}\left\{u_{\alpha}(\varpi^{m + c_\alpha'}x_i),~u_{-\alpha}(\varpi^{m + c_{-\alpha}'}x_i)\right\}$$
is $-d_{\alpha,\alpha}d_{\alpha,-\alpha} = -1$.  Finally, using the isomorphism $T_m/T_{m + e}\cong X_*(\bT)\otimes_{\bbZ} (1 + \fp^m)/(1 + \fp^{m + e})$, we see that the determinant of conjugation by $\salpha$ on $T_m/T_{m + e}$ is $(-1)^{\boldl(s_{\alpha})ef}$.  Combining everything, we get
$$\sideset{}{_{\bbF_p}}\det\left(\salpha|_{J_m/J_{m + e}}\right) = (-1)^{(\boldl(s_{\alpha}) + 1)ef} = 1.$$

Using the above calculations and the definition of $\T_{\salpha}$, we obtain
\begin{eqnarray*}
\varphi\cdot\T_{\salpha} \smile \psi & = & \textnormal{cor}_{J_1}^{I_1}\circ\salpha_*^{-1}\circ\textnormal{res}_{J_1}^{I_1}(\varphi)\smile\psi\\
& = & \textnormal{cor}_{J_1}^{I_1}\left(\salpha_*^{-1}\circ\textnormal{res}_{J_1}^{I_1}(\varphi)\smile\textnormal{res}_{J_1}^{I_1}(\psi)\right)\\
& = & \textnormal{cor}_{J_1}^{I_1}\left(\salpha_*^{-1}\left(\textnormal{res}_{J_1}^{I_1}(\varphi)\smile\salpha_*\circ\textnormal{res}_{J_1}^{I_1}(\psi)\right)\right)\\
& = & \textnormal{cor}_{J_1}^{I_1}\left(\textnormal{res}_{J_1}^{I_1}(\varphi)\smile\salpha_*\circ\textnormal{res}_{J_1}^{I_1}(\psi)\right)\\
& = & \varphi\smile \textnormal{cor}_{J_1}^{I_1}\circ\salpha_*\circ\textnormal{res}_{J_1}^{I_1}(\psi)\\
& = & \varphi\smile \psi\cdot \T_{\salpha^{-1}}.
\end{eqnarray*}
Now let $\omega\in \widetilde{\Omega}$.  Using Theorem \ref{htop}, we get
\begin{eqnarray*}
\varphi\cdot\T_\omega \smile \psi & = & \omega_*^{-1}(\varphi)\smile \psi\\
 & = & \omega_*^{-1}\left(\varphi\smile \omega_*(\psi)\right)\\
 & = & \varphi\smile \omega_*(\psi)\\
 & = & \varphi\smile \psi\cdot\T_{\omega^{-1}}.\\
\end{eqnarray*}
The result follows.  

\end{proof}

\bibliographystyle{amsalpha}
\bibliography{refs}

\end{document}